\newcommand{\bdry}[1]{\partial #1}
\newcommand{\CPS}[1]{$(CPS)_{#1}$}
\newcommand{\D}{{\cal D}}
\newcommand{\J}{{\cal J}}
\newcommand{\closure}[1]{\overline{#1}}
\newcommand{\comp}{\circ}
\newcommand{\dint}{\ds{\int}}
\newcommand{\ds}[1]{\displaystyle #1}
\newcommand{\eps}{\varepsilon}
\newcommand{\norm}[2][]{\left\|#2\right\|_{#1}}
\renewcommand{\o}{\text{o}}
\newcommand{\R}{\mathbb R}
\newcommand{\N}{\mathbb N}
\newcommand{\seq}[1]{\left(#1\right)}
\newcommand{\set}[1]{\left\{#1\right\}}
\newcommand{\wCPS}[1]{$(wCPS)_{#1}$}
\DeclareMathOperator*{\esssup}{ess\; sup}
\DeclareMathOperator{\divg}{div}
\newenvironment{enumroman}{\begin{enumerate}

}{\end{enumerate}}
\newenvironment{properties}[1]{\begin{enumerate}

}{\end{enumerate}}
\newtheorem{corollary}{Corollary}[section]
\newtheorem{lemma}[corollary]{Lemma}
\newtheorem{theorem}[corollary]{Theorem}
\theoremstyle{definition}
\newtheorem{definition}[corollary]{Definition}
\theoremstyle{remark}
\newtheorem{example}[corollary]{Example}
\newtheorem{remark}[corollary]{Remark}
\numberwithin{equation}{section}
\begin{document}
%%%%%%%%%%%%%%%%%%%%%%%%%%%%%%%%%%%%%%%%%%%%%%%%%%%%%%%%%%%%%%%%%%%%%%%%

\title{\bf Existence results for a borderline case\\
of a class of $p$-Laplacian problems}

\author{Anna Maria Candela$^1$, Kanishka Perera$^2$ and Addolorata Salvatore$^3$\\
{\small $^{1,3}$Dipartimento di Matematica, Universit\`a degli Studi di Bari Aldo Moro} \\
{\small Via E. Orabona 4, 70125 Bari, Italy}\\
{\small $^2$Department of Mathematics and Systems Engineering, Florida Institute of Technology}\\
{\small Melbourne, FL 32901, USA}\\
{\small \it $^1$annamaria.candela@uniba.it, $^2$kperera@fit.edu, $^3$addolorata.salvatore@uniba.it}}
\date{}

\maketitle

%---------------------------------------------

\begin{abstract}
The aim of this paper is investigating the existence of at least one nontrivial  
bounded solution of the new asymptotically ``linear'' problem
\[ 
\left\{
\begin{array}{ll}
- \divg \left[\left(A_0(x) + A(x) |u|^{ps}\right) |\nabla u|^{p-2} \nabla u\right] 
+ s\ A(x) |u|^{ps-2} u\ |\nabla u|^p &\\[5pt]
\qquad\qquad\qquad =\ \mu |u|^{p (s + 1) -2} u + g(x,u) & \hbox{in $\Omega$,}\\[10pt]
u = 0 & \hbox{on $\bdry{\Omega}$,}
\end{array}
\right.
\]
where $\Omega$ is a bounded domain in $\R^N$, $N \ge 2$, $1 < p < N$, $s > 1/p$, 
both the coefficients $A_0(x)$ and $A(x)$ are in $L^\infty(\Omega)$ 
and far away from 0, $\mu \in \R$, 
and the ``perturbation'' term
$g(x,t)$ is a Carath\'{e}odory function on $\Omega \times \R$
which grows as $|t|^{r-1}$ with $1\le r < p (s + 1)$ and is such that
$g(x,t) \approx \nu |t|^{p-2} t$ as $t \to 0$.

By introducing suitable thresholds for the parameters $\nu$ and $\mu$,
which are related to the coefficients $A_0(x)$, respectively $A(x)$,
under suitable hypotheses on $g(x,t)$, the existence of a nontrivial weak solution
is proved if either $\nu$ is large enough with $\mu$ small enough
or $\nu$ is small enough with $\mu$ large enough.

Variational methods are used and in the first case 
a minimization argument applies while in the second case a suitable Mountain Pass 
Theorem is used.  
\end{abstract}

%-------------------------------

\noindent
{\it \footnotesize 2020 Mathematics Subject Classification}. {\scriptsize 
35J62, 35J92, 47J30, 35Q55, 58E30}.\\
{\it \footnotesize Key words}. {\scriptsize Quasilinear elliptic equation, 
asymptotically ``linear'' term, weak bounded nontrivial solution, 
weak Cerami-Palais-Smale condition, Minimum Theorem, Mountain Pass Theorem}.

%--------------------------------------------------

\section{Introduction}

In the last years, an increasing interest has been 
devoted to the study of (nontrivial bounded) solutions 
of the quasilinear elliptic $p$--Laplacian type problem
\begin{equation} \label{1.1}
\hspace{-2pt} 
\left\{\begin{array}{ll}
- \divg [(A_0(x) + A(x) |u|^{ps}) |\nabla u|^{p-2} \nabla u] 
+ s\ A(x) |u|^{ps-2} u\ |\nabla u|^p &\\[5pt]
 \qquad\qquad\qquad =\ \mu |u|^{q -2} u + g(x,u) & \hbox{in $\Omega$,}\\[10pt]
u = 0 &\hbox{on $\bdry{\Omega}$,}
\end{array}
\right.
\end{equation}
where $\Omega$ is a bounded domain in $\R^N$, $N \ge 2$,
$1 < p < N$, $s > 1/p$, $A_0(x)$ and $A(x)$ are given functions on $\Omega$, 
$1 \le q \le p^\ast (s + 1)$ with $p^\ast = Np/(N - p)$ critical Sobolev exponent, 
$\mu \in \R$, and $g: \Omega \times \R \to \R$ 
is a given nonlinear term which grows with a power strictly less than $q-1$. 

From a physical point of view, problem \eqref{1.1} is interesting for its applications.
In fact, if $\Omega = \R^N$ 
model equations of \eqref{1.1} appear in Mathematical Physics 
and describe several physical phenomena in the theory of superfluid film and in dissipative
quantum mechanics (for more details, see, e.g., \cite{LWW} and references therein)
and a possible approach to the study of solutions of equation \eqref{1.1}
on $\R^N$ requires an approximation scheme related to the existence of solutions  
in bounded domains (see, e.g., \cite{CSS2022}).  

If $s=0$ problem \eqref{1.1} has been widely investigated
by using variational methods
(see, e.g., \cite{PAO} and references therein), 
but if $s > 0$ and coefficient $A(x)$ is nontrivial,
the ``natural'' functional $\J$ related to problem \eqref{1.1}
is not $C^1$ in $W^{1,p}_0(\Omega)$ as it 
is G\^ateaux differentiable only along directions of 
$W^{1,p}_0(\Omega) \cap L^\infty(\Omega)$. 

In the past, such a problem has been overcome by introducing suitable definitions
of critical point (see, e.g., \cite{AB1,AB2,Ca,PeSq}). 

Here, following the ideas in \cite{CP2009}, we consider 
the functional $\J$ defined in 
\[
X = W^{1,p}_0(\Omega) \cap L^\infty(\Omega)
\]
so that, under suitable assumptions, it is $C^1$.

Then, a weak solution of this problem is a function $u$ belonging 
to the Banach space $X$ that satisfies
\[
\begin{split}
&\int_\Omega \left[\left(A_0(x) + A(x)\ |u|^{ps}\right) |\nabla u|^{p-2} \nabla u \cdot \nabla v 
+ s\ A(x)\ |u|^{ps-2} u\ v\ |\nabla u|^p \right] dx\\[5pt]
&\qquad \qquad = \mu \int_\Omega |u|^{q-2} u\ v\ dx + 
\int_\Omega g(x,u)\ v\ dx \qquad \forall v \in X.
\end{split}
\]

The purpose of this paper is to study the existence of nontrivial weak solutions
of problem \eqref{1.1} once $q\ge 1$ is given. 

The case $p (s + 1) < q < p^\ast (s + 1)$ was studied in Candela et al.\! \cite{MR4142333}
by using a Mountain Pass argument, on the contrary 
the case $1 \le q < p(s + 1)$ was investigated in \cite{CS2020}
by means of a Minimum Theorem.
So, here we focus on the borderline case $q = p (s + 1)$ and 
we note that, when $s > p/(N - p)$, it has to be
$p (s + 1) > p^\ast$ 
and hence problem \eqref{1.1} may also be supercritical.

If we replace $A_0(x) + A(x)\ |u|^{ps}$ 
with a generic ${\cal A}(x,u)$ which has not an explicit growth depending on $u$, 
the related equation becomes 
\[
- \divg [{\cal A}(x,u) |\nabla u|^{p-2} \nabla u] 
+ \frac{1}{p} {\cal A}_u(x,u)\ |\nabla u|^p = \mu |u|^{q -2} u + g(x,u)\quad \hbox{in $\Omega$}
\]
and the corresponding asymptotically $p$--linear case requires $q=p$.
In this setting, if symmetry occurs, the existence of multiple solutions 
have been proved since the seminal papers for $p=2$ and ${\cal A}(x,u) \equiv 1$
(see \cite{AZ,BBF}) till the most recent ones for $p>1$ and 
${\cal A}_u(x,u) \not\equiv 0$ in the non resonant case (see \cite{CPP2015} if $p>N$, 
\cite{CP2017} if $1< p\le N$).  
Here, as ${\cal A}(x,u) = A_0(x) + A(x)\ |u|^{ps}$, 
there is a new asymptotically ``linear'' case 
which depends not only from $p$ but also from $s$,
namely $q=p (s+1)$, and as a first step in investigating the properties 
of this new threshold, we look for the existence of at least one nontrivial
weak bounded solution when no symmetry occurs. 

More precisely, for $q = p (s + 1)$ with $s > 1/p$, 
we have that problem \eqref{1.1} reduces to
\begin{equation} \label{1.2}
\left\{
\begin{array}{ll}
- \divg \left[\left(A_0(x) + A(x) |u|^{ps}\right) |\nabla u|^{p-2} \nabla u\right] 
+ s\ A(x) |u|^{ps-2} u\ |\nabla u|^p &\\[5pt]
\qquad\qquad\qquad =\ \mu |u|^{p (s + 1) -2} u + g(x,u) & \hbox{in $\Omega$,}\\[10pt]
u = 0 & \hbox{on $\bdry{\Omega}$.}
\end{array}
\right.
\end{equation}
Assume that the coefficients $A_0$, $A: \Omega \to \R$
and the nonlinear term $g: \Omega \times \R \to \R$ are such that:
\begin{properties}{h}
\item \label{h1} 
$A_0\in L^\infty(\Omega)$, $A \in L^\infty(\Omega)$, and $\alpha_0 > 0$ exists so that
\[
A_0(x) \ge \alpha_0\quad \hbox{and}\quad 
A(x) \ge \alpha_0\qquad \hbox{for a.e.\! $x \in \Omega$;}
\]
\item \label{h2} $g(x,t)$ is a Carath\'{e}odory function in $\Omega \times \R$, 
i.e., $g(\cdot,t)$ is measurable 
for all $t \in \R$ and $g(x,\cdot)$ 
is continuous for a.e. $x \in \Omega$, and 
some constants $a_1$, $a_2 > 0$ and an exponent $1 \le r < p(s+1)$ exist such that
\[
|g(x,t)| \le a_1 + a_2\ |t|^{r-1} \quad\hbox{for a.e.\! $x \in \Omega$, all $t \in \R$;}
\]
\item \label{h3} a parameter $\nu \in \R$ exists such that
\[
g(x,t) = \nu |t|^{p-2}\ t + \o(|t|^{p-1})\quad \hbox{as $t \to 0$, uniformly a.e.\! in $\Omega$.}
\]
\end{properties}

Firstly, we consider the asymptotic eigenvalue problems
\[
\left\{
\begin{array}{ll}
- \divg \left(A_0(x)\ |\nabla u|^{p-2}\ \nabla u\right) = \nu |u|^{p-2} u & \text{in } \Omega\\[10pt]
u = 0 & \text{on } \bdry{\Omega}
\end{array}\right.
\]
and
\[
\left\{
\begin{array}{ll}
- \divg \left(A(x)\ |u|^{ps}\ |\nabla u|^{p-2}\ \nabla u\right) 
+ s A(x)\ |u|^{ps-2} u\ |\nabla u|^p = \mu |u|^{p(s+1)-2} u & \text{in } \Omega\\[10pt]
u = 0 & \text{on } \bdry{\Omega}
\end{array}\right.
\]
which will play a role in our results. 
Thus, we introduce the Rayleigh quotients on $X$ associated with these problems, which are
$R$, $S : X \setminus \set{0} \to \R$ defined as
\[
R(u) = \frac{\dint_\Omega A_0(x) |\nabla u|^p dx}{\dint_\Omega |u|^p dx}, 
\qquad S(u) = \frac{(s + 1) \dint_\Omega A(x)\ |u|^{ps}\ |\nabla u|^p dx}{\dint_\Omega |u|^{p (s+1)} dx},
\]
respectively. 
Set
\begin{equation} \label{1.3}
\nu_1 = \inf_{u \in X \setminus \set{0}} R(u), \qquad \mu_1 = \inf_{u \in X \setminus \set{0}} S(u).
\end{equation}
For any $u \in X$, we have that 
\begin{equation}
\label{equal}
|\nabla (|u|^s u)|^p\ =\ (s + 1)^p\ |u|^{ps}\ |\nabla u|^p\quad
\hbox{a.e.\! in $\Omega$}
\end{equation}
and, hence, $|u|^s u \in W^{1,p}_0(\Omega)$. 
From \eqref{equal} and \ref{h1} it follows that
\[
\nu_1\ \ge\ \alpha_0 \lambda_1, \qquad 
\mu_1\ \ge\ \frac{\alpha_0}{(s+1)^{p-1}}\ \lambda_1
\]
with 
\begin{equation}
\label{first}
\lambda_1\ =\ \inf_{u \in W^{1,p}_0(\Omega) \setminus \set{0}}\
\frac{\dint_\Omega |\nabla u|^p dx}{\dint_\Omega |u|^p dx}\ >\ 0
\end{equation}
first eigenvalue of $- \Delta_p$ in $W^{1,p}_0(\Omega)$.
Thus, $\nu_1$, $\mu_1 > 0$. 
\smallskip

Now, we can state our first existence result.

\begin{theorem} \label{Theorem 1.1}
Assume \ref{h1}--\ref{h3}. 
If $\nu > \nu_1$ and $\mu < \mu_1$, 
then problem \eqref{1.2} has a nontrivial weak bounded solution.
\end{theorem}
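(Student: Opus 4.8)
The plan is to realise the weak solutions of \eqref{1.2} as the critical points of the functional $\J : X \to \R$,
\[
\J(u) = \frac1p \int_\Omega \seq{A_0(x) + A(x)\,\abs{u}^{ps}}\,\abs{\nabla u}^p\,dx - \frac{\mu}{p(s+1)}\int_\Omega \abs{u}^{p(s+1)}\,dx - \int_\Omega G(x,u)\,dx ,
\]
where $G(x,t) = \int_0^t g(x,\tau)\,d\tau$, which under \ref{h1}--\ref{h2} is of class $C^1$ on $X$ with derivative matching the weak formulation in the excerpt. Since $X$ is not reflexive and $\J$ is differentiable only on $X$, I would locate a nontrivial minimiser through the abstract minimum theorem adapted to this setting (in the spirit of \cite{CP2009,CS2020}), whose hypotheses I would verify in three steps: that $\J$ is coercive and bounded below, that $\inf_X \J < 0 = \J(0)$, and that $\J$ satisfies the weak Cerami--Palais--Smale condition \wCPS{c} at the level $c = \inf_X \J$. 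The theorem then furnishes $u_0 \in X$ with $\J(u_0) = c < 0$, whence $u_0 \neq 0$ is the desired nontrivial bounded weak solution.

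Coercivity is where the assumption $\mu < \mu_1$ enters decisively. From the definition \eqref{1.3} of $\mu_1$ one has $\int_\Omega A(x)\,\abs{u}^{ps}\,\abs{\nabla u}^p\,dx \ge \frac{\mu_1}{s+1}\int_\Omega \abs{u}^{p(s+1)}\,dx$ for every $u \in X\setminus\set{0}$, so substituting this lower bound into the top-order block gives, for every $\mu < \mu_1$,
\[
\frac1p \int_\Omega A(x)\,\abs{u}^{ps}\,\abs{\nabla u}^p\,dx - \frac{\mu}{p(s+1)}\int_\Omega \abs{u}^{p(s+1)}\,dx \ \ge\ \frac{\mu_1 - \mu}{p(s+1)}\int_\Omega \abs{u}^{p(s+1)}\,dx\ \ge\ 0 .
\]
Combined with $A_0 \ge \alpha_0$ from \ref{h1}, this yields
\[
\J(u)\ \ge\ \frac{\alpha_0}{p}\,\norm[W^{1,p}_0(\Omega)]{u}^p + \frac{\mu_1 - \mu}{p(s+1)}\int_\Omega \abs{u}^{p(s+1)}\,dx - \int_\Omega G(x,u)\,dx .
\]
Since \ref{h2} gives $\abs{G(x,t)} \le a_1\abs{t} + \frac{a_2}{r}\abs{t}^r$ with $1 \le r < p(s+1)$, a Young inequality absorbs the last integral into the two preceding positive terms up to an additive constant, so $\J$ is bounded below and $\J(u) \to +\infty$ as $\norm[W^{1,p}_0(\Omega)]{u} \to \infty$.

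To see that the infimum is negative I would test along a first eigenfunction $e_1$ of the Rayleigh quotient $R$: by density of $X$ in $W^{1,p}_0(\Omega)$ the infimum $\nu_1$ is realised, and being a bounded eigenfunction of the weighted $p$-Laplacian, $e_1$ lies in $X$ and satisfies $\int_\Omega A_0(x)\,\abs{\nabla e_1}^p\,dx = \nu_1 \int_\Omega \abs{e_1}^p\,dx$. Using \ref{h3} in the form $G(x,t) = \frac{\nu}{p}\,\abs{t}^p + \o(\abs{t}^p)$ as $t \to 0$, and noting that the weighted and $\mu$-terms are $\O(t^{p(s+1)}) = \o(t^p)$ because $p(s+1) > p$, a Taylor expansion gives
\[
\J(t\,e_1)\ =\ \frac{t^p}{p}\,(\nu_1 - \nu)\int_\Omega \abs{e_1}^p\,dx + \o(t^p) \qquad \text{as } t \to 0^+ .
\]
Since $\nu > \nu_1$ the leading coefficient is strictly negative, so $\J(t\,e_1) < 0$ for small $t > 0$ and therefore $\inf_X \J < 0$.

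The main obstacle is the verification of \wCPS{c}, precisely because non-reflexivity of $X$ and the failure of $\J$ to be $C^1$ on all of $W^{1,p}_0(\Omega)$ block any direct weak-lower-semicontinuity argument. Given a Cerami sequence $\seq{u_n}$ at level $c$ (produced from the coercive, bounded-below $\J$ via Ekeland's principle, so that $(1+\norm[X]{u_n})\,\norm[X']{\J'(u_n)} \to 0$), the coercivity bounds it in $W^{1,p}_0(\Omega)$ and in $L^{p(s+1)}(\Omega)$, giving, up to a subsequence, $u_n \wto u$ in $W^{1,p}_0(\Omega)$ and $u_n \to u$ a.e. The two delicate points are then: (i) upgrading this to a.e.\ convergence $\nabla u_n \to \nabla u$, which I would obtain by testing with truncations of $u_n - u$ in the spirit of a Boccardo--Murat argument, so as to pass to the limit in the non-variational weighted term $\int_\Omega A(x)\,\abs{u_n}^{ps}\,\abs{\nabla u_n}^p\,dx$; and (ii) showing that the limit $u$ actually belongs to $L^\infty(\Omega)$ and is a critical point of $\J$. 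I expect the $L^\infty$ bound to be the crux, to be established by a Moser--Stampacchia iteration exploiting the coercivity of the principal part; once $u \in X$ with $\J'(u) = 0$ and $\J(u) = c < 0$, the minimum theorem delivers the nontrivial bounded weak solution.
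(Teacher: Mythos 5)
Your proposal is correct in outline and follows the same overall architecture as the paper: minimization of the same functional $\J$ on $X$, lower boundedness from $\mu<\mu_1$ by absorbing the $\mu$-term via the definition \eqref{1.3} of $\mu_1$ and the subcritical growth of $G$ (this is precisely the paper's Lemma \ref{upperG}), negativity of the infimum from $\nu>\nu_1$ by expanding $\J$ along small multiples of an eigenfunction-type element, and compactness through the \wCPS{\beta} condition. Three differences in execution are worth recording. (a) The paper produces the minimizer not through Ekeland's principle but through the deformation lemma (Lemma \ref{Lemma 2.1}) and the Minimum Theorem \ref{Theorem 2.3}; if you insist on Ekeland you must use the refined version that yields Cerami sequences, because plain Ekeland only gives $\norm[X']{d\J(u_n)}\to 0$, while the compactness analysis pairs $d\J(u_n)$ with test functions (e.g. $u_n$ itself) whose $L^\infty$-norms need not be bounded, so the factor $(1+\norm[X]{u_n})$ in your displayed Cerami property is not cosmetic. (b) Your test function is a minimizer $e_1$ of the weighted quotient $R$, whereas the paper uses the first eigenfunction $\varphi_1$ of the unweighted $p$-Laplacian. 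Your choice requires attainment of $\nu_1$ together with $L^\infty$-regularity of the minimizer, which you assert but do not prove (a standard direct-method plus Stampacchia/Moser argument); in return it yields the clean expansion $\J(t e_1)=\frac{t^p}{p}\,(\nu_1-\nu)\int_\Omega|e_1|^p\,dx+\o(t^p)$. In fact your choice is the more robust one: with the normalization $\int_\Omega|\varphi_1|^p\,dx=1$, the second inequality in the paper's estimate of $\J(\sigma\varphi_1)$ tacitly requires $\int_\Omega A_0(x)|\nabla\varphi_1|^p\,dx\le\nu_1$, i.e.\ that the unweighted eigenfunction also minimizes $R$, which need not hold for nonconstant $A_0$; alternatively one can avoid attainment altogether by picking any $\varphi\in X\setminus\set{0}$ with $R(\varphi)<\widetilde\nu$ for some $\widetilde\nu\in\,]\nu_1,\nu[$, which exists by the very definition of the infimum. (c) Both you and the paper leave the analytic core of \wCPS{\beta} --- a.e.\ convergence of gradients, strong $W^{1,p}_0$-convergence, and the $L^\infty$ bound on the limit --- to external machinery: the paper invokes the arguments of \cite{MR4142333}, you sketch Boccardo--Murat truncation plus Moser--Stampacchia iteration. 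Your identification of these two points as the crux is accurate, but be aware that in a self-contained write-up this is where most of the actual work lies.
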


When $\mu \ge \mu_1$, we need an additional assumption on the nonlinearity $g(x,t)$. 
To this aim, let
\[
G(x,t) = \int_0^t g(x,\tau)\ d\tau
\]
be the primitive of $g(x,t)$ with $G(x,0)=0$ for a.e. $x \in \Omega$, 
and note that \ref{h2} gives
\begin{equation} \label{1.4}
|G(x,t)| \le a_3 + a_4\, |t|^r \quad \text{for a.e. } x \in \Omega, \text{ all } t \in \R
\end{equation}
for some constants $a_3, a_4 > 0$. Set
\[
H(x,t) = \frac{1}{s}\ \big[p (s + 1)\ G(x,t) - g(x,t)\ t \big], 
\quad (x,t) \in \Omega \times \R.
\]
We assume that
\begin{properties}{h}
\setcounter{enumi}{3}
\item \label{h4} some constants $0 \le \vartheta < \nu_1$ and $a_5 > 0$
exist so that
\[
H(x,t) \le \vartheta\ |t|^p + a_5\quad \hbox{for a.e.\! $x \in \Omega$, all $t \in \R$.}
\]
\end{properties}

\begin{remark}
If $r < p$ condition \ref{h4} follows from \ref{h2} and \eqref{1.4} 
just taking any constant $0 < \vartheta < \nu_1$. 
\end{remark}

Our second result is the following theorem.

\begin{theorem} \label{Theorem 1.2}
Assume \ref{h1}--\ref{h4}. 
If $\nu < \nu_1$ and $\mu > \mu_1$, 
then problem \eqref{1.2} has a nontrivial weak bounded solution.
\end{theorem}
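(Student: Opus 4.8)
The plan is to obtain the solution as a mountain pass critical point of the $C^1$ energy functional $\J : X \to \R$,
\[
\J(u) = \frac{1}{p}\int_\Omega \left(A_0(x) + A(x)|u|^{ps}\right)|\nabla u|^p\,dx - \frac{\mu}{p(s+1)}\int_\Omega |u|^{p(s+1)}\,dx - \int_\Omega G(x,u)\,dx,
\]
whose critical points are precisely the weak solutions of \eqref{1.2} and which satisfies $\J(0)=0$.

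First I would check the mountain pass geometry. Since $u \in X \hookrightarrow L^\infty(\Omega)$, on a small ball the expansion $G(x,u)=\frac{\nu}{p}|u|^p+\o(|u|^p)$ coming from \ref{h3} holds uniformly; as the $A$-term is nonnegative and the $|u|^{p(s+1)}$-term is of higher order, the leading part of $\J$ behaves like $\frac1p\big(\int_\Omega A_0|\nabla u|^p - \nu\int_\Omega |u|^p\big)$. Because $\nu < \nu_1 = \inf R$, definition \eqref{1.3} gives $\int_\Omega A_0|\nabla u|^p - \nu\int_\Omega|u|^p \ge (1-\nu/\nu_1)\int_\Omega A_0|\nabla u|^p \ge c\,\norm[W^{1,p}_0]{u}^p$, so that $\J(u)\ge\beta>0$ on a sphere $\norm{u}=\rho$ with $\rho$ small. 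For the descent direction I would use $\mu>\mu_1=\inf S$: choosing $w\in X\setminus\set{0}$ with $S(w)<\mu$, i.e. $(s+1)\int_\Omega A|w|^{ps}|\nabla w|^p<\mu\int_\Omega|w|^{p(s+1)}$, the coefficient of the leading power $t^{p(s+1)}$ in $\J(tw)$ is negative, while \eqref{1.4} bounds $\int_\Omega G(x,tw)$ by $\O(t^r)$ with $r<p(s+1)$; hence $\J(tw)\to-\infty$ and some $e=t_0w$ satisfies $\norm{e}>\rho$ and $\J(e)\le0$.

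The core of the argument is the \wCPS{c} condition at the mountain pass level, where \ref{h4} is decisive. A direct computation, in which the $A|u|^{ps}|\nabla u|^p$ and $|u|^{p(s+1)}$ terms cancel and the nonlinear remainder reproduces exactly $H$, yields
\[
p(s+1)\J(u) - \J'(u)[u] = s\int_\Omega A_0(x)|\nabla u|^p\,dx - s\int_\Omega H(x,u)\,dx.
\]
Bounding $H$ by \ref{h4} and using $\int_\Omega A_0|\nabla u|^p \ge \nu_1\int_\Omega|u|^p$ from \eqref{1.3}, I would obtain
\[
s(\nu_1-\vartheta)\int_\Omega|u|^p\,dx \le p(s+1)\J(u)-\J'(u)[u]+sa_5\,\abs{\Omega}.
\]
Along a Cerami sequence $(u_n)$ the right-hand side is bounded, since $\J(u_n)\to c$ and $(1+\norm{u_n})\norm{\J'(u_n)}\to0$ force $\J'(u_n)[u_n]\to0$; as $\vartheta<\nu_1$, this gives an $L^p$-bound, and reinserting it a uniform $W^{1,p}_0(\Omega)$-bound on $(u_n)$.

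The hard part will be upgrading boundedness to the full \wCPS{c} property in this non-smooth framework, where $\J$ is only G\^ateaux differentiable along $X$-directions. I would pass to a weak limit $u$ in $W^{1,p}_0(\Omega)$, establish the $L^\infty$ bound making $u\in X$, and prove the strong convergence needed to pass to the limit in the quasilinear terms $(A_0+A|u_n|^{ps})|\nabla u_n|^{p-2}\nabla u_n$ and $sA|u_n|^{ps-2}u_n|\nabla u_n|^p$, relying on the compactness machinery developed earlier for this class of functionals. Once \wCPS{c} is verified, the Mountain Pass Theorem in its form valid under the weak Cerami condition produces a critical point $u$ with $\J(u)=c\ge\beta>0$; in particular $u\neq0$, giving the desired nontrivial weak bounded solution of \eqref{1.2}.
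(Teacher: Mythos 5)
Your overall skeleton (mountain pass for $\J$ plus a weak Cerami--Palais--Smale verification based on the $H$-identity) is the same as the paper's, and your identity $p(s+1)\J(u)-\langle d\J(u),u\rangle = s\int_\Omega A_0(x)|\nabla u|^p dx - s\int_\Omega H(x,u)\,dx$ is correct. The genuine gap is in the mountain pass geometry. You claim that the terms $\frac{\mu}{p(s+1)}\int_\Omega|u|^{p(s+1)}dx$ and the remainder of $G$ are ``of higher order'', so that $\J(u)\ge c\norm[W^{1,p}_0]{u}^p>0$ on a small sphere $\norm{u}=\rho$. Neither reading of ``sphere'' works. If the sphere is taken in $W^{1,p}_0(\Omega)$: hypothesis \ref{h3} is an expansion as $t\to0$, and a small $W^{1,p}_0$-ball gives no control on $|u|_\infty$ (recall $p<N$), so the uniform expansion of $G$ is unavailable there; worse, the problem may be supercritical ($p(s+1)>p^*$ whenever $s>p/(N-p)$), and then $\int_\Omega|u|^{p(s+1)}dx$ is \emph{unbounded} on each $W^{1,p}_0$-sphere (tall spikes of fixed gradient norm), so it is not a higher-order perturbation of $\norm[W^{1,p}_0]{u}^p$; it can only be controlled by playing it against the quasilinear term $\int_\Omega A(x)|u|^{ps}|\nabla u|^p dx$, which your estimate never uses. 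If instead the sphere is taken in the $X$-norm (so that $|u|_\infty\le\rho$ and \ref{h3} does apply): then your lower bound $c\norm[W^{1,p}_0]{u}^p$ has infimum $0$ over $\set{u\in X:\ \norm[X]{u}=\rho}$, because tent functions supported on tiny balls have $|u|_\infty$ close to $\rho$ but arbitrarily small $W^{1,p}_0$-norm, and on them $\J\to0$; hence $\inf_{\bdry{B_\rho}}\J=0=\J(0)$ and the strict inequality required by Theorem \ref{Theorem 2.4} fails. The paper avoids both difficulties by taking as ``ridge'' the level set $\bdry{U_\rho}$ of $L(u)=\int_\Omega\left(A_0(x)+A(x)|u|^{ps}\right)|\nabla u|^p dx$ (this is exactly why Theorem \ref{Theorem 2.4} is stated for an arbitrary neighborhood $U$ rather than a ball), together with the key estimate \eqref{less}, which bounds the supercritical quantity $\int_\Omega|u|^{p^*(s+1)}dx$ by $a_{14}\,L(u)^{\frac{p}{N-p}}\int_\Omega A(x)|u|^{ps}|\nabla u|^p dx$ and so allows absorption into the quasilinear part precisely when $L(u)=\rho$ is small; see Lemma \ref{Lemma 3.2}.

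There is a second, smaller gap in the compactness step. Your inequality yields only that $(u_n)_n$ is bounded in $W^{1,p}_0(\Omega)$; but the compactness lemma for this class of functionals (Lemma \ref{rellich}) requires \eqref{c1}, i.e.\ boundedness of $\int_\Omega(1+|u_n|^{ps})|\nabla u_n|^p dx$, and the bound on $\int_\Omega A(x)|u_n|^{ps}|\nabla u_n|^p dx$ does \emph{not} follow from the $W^{1,p}_0$ bound. The paper proves it by a dedicated contradiction argument: assuming $\rho_n := \left(\int_\Omega A(x)|u_n|^{ps}|\nabla u_n|^p dx\right)^{1/(p(s+1))}\to+\infty$, the normalized functions $\widetilde u_n=u_n/\rho_n$ tend to $0$ strongly in $W^{1,p}_0(\Omega)$, and dividing \eqref{3.1} by $\rho_n^{p(s+1)}$ forces $\int_\Omega A(x)|\widetilde u_n|^{ps}|\nabla\widetilde u_n|^p dx\to0$, contradicting the normalization \eqref{3.5}. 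This step exploits the specific structure of $\J$ along the \CPS{\beta} sequence and is not contained in the ``compactness machinery'' you appeal to; without it one cannot even extract a limit $u$ with $|u|^su\in W^{1,p}_0(\Omega)$, let alone pass to the limit in the quasilinear terms. By contrast, your descent direction (via $S(\varphi)<\mu$, identical to the paper's use of $\widetilde\mu\in\,]\mu_1,\mu[$) and the nontriviality argument ($\beta>0=\J(0)$) are correct and match the paper.
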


\begin{example}\label{ex1}
The function
\[
g(x,t) = \nu\ |t|^{p-2} t + b(x)\ |t|^{r-2} t
\]
satisfies \ref{h2}--\ref{h3} if $b \in L^\infty(\Omega)$ and $p < r < p (s + 1)$.\\
Moreover, if $\nu < \nu_1$ and coefficient $b(x)$ is nonpositive, 
condition \ref{h4} holds.
\end{example}

We note that if assumptions \ref{h1}--\ref{h3} are satisfied 
but both $\nu \le \nu_1$ and $\mu \le \mu_1$ hold,
then some models of problem \eqref{1.2} admits only the trivial solution
(see Remark \ref{trivial}).

Proofs of Theorems \ref{Theorem 1.1} and \ref{Theorem 1.2} will be given in Section \ref{Section 3}
after some preliminaries introduced in the next sections.

%-------------------------------

\section{Abstract settings} \label{abstract}

In this section we recall from \cite{MR4142333} 
the abstract tools that we need to prove our theorems. 

Let $(X,\norm[X]{\cdot})$ be a Banach space with dual $(X',\norm[X']{\cdot})$. 
Assume that $X$ is continuously embedded in a Banach space $(W,\norm[W]{\cdot})$. 
Let $J : \D \subset W \to \R$ be a functional such that $X \subset \D$ and $J \in C^1(X,\R)$.

For $\beta \in \R$, let
\[
K_J^\beta = \set{u \in X : J(u) = \beta,\, dJ(u) = 0}
\]
be the set of critical points of $J$ in $X$ at the level $\beta$ and let
\[
J^\beta = \set{u \in X : J(u) \le \beta}
\]
be the sublevel set of $J$ with respect to $\beta$.

\begin{definition}
We say that $J$ satisfies the weak Cerami-Palais-Smale condition in $X$
at the level $\beta \in \R$, or the \wCPS{\beta} condition for short, 
if for every $(CPS)_\beta$--sequence $\seq{u_n}_n \subset X$, i.e. a sequence
such that
\[
J(u_n) \to \beta, \qquad \norm[X']{dJ(u_n)} (1 + \norm[X]{u_n}) \to 0,
\]
there exists $u \in K_J^\beta$ such that 
$\norm[W]{u_n - u} \to 0$ for a renamed subsequence.
\end{definition}

We have the following deformation lemma (see \cite[Lemma 2.2]{MR4142333}).

\begin{lemma} \label{Lemma 2.1}
If $\beta \in \R$ is such that $J$ satisfies the \wCPS{\beta} 
condition and $K_J^\beta = \emptyset$, 
then for any $\bar{\eps} > 0$, there exist $0 < \eps < \bar{\eps}$ 
and a homeomorphism $\psi : X \to X$ such that
\begin{enumroman}
\item $\psi(J^{\beta + \eps}) \subset J^{\beta - \eps}$,
\item $\psi(u) = u$ if $J(u) \le \beta - \bar{\eps}$ or $J(u) \ge \beta + \bar{\eps}$.
\end{enumroman}
\end{lemma}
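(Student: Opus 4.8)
The plan is to run a pseudo-gradient flow entirely inside $X$, exploiting the \wCPS{\beta} condition only to obtain a quantitative lower bound on $dJ$ near the level $\beta$, and using the Cerami-type weight $(1+\norm[X]{u})$ to rescale the flow so that it has at most linear growth and hence exists for all time. As a first step I would establish that there exist $\eps \in (0,\bar\eps/2)$ and $\sigma > 0$ such that
\[
\norm[X']{dJ(u)}\,(1 + \norm[X]{u})\ \ge\ \sigma \qquad \text{whenever } |J(u) - \beta| \le 2\eps.
\]
This follows by contradiction: if no such $\eps,\sigma$ existed, one could extract a sequence $\seq{u_n}_n \subset X$ with $J(u_n) \to \beta$ and $\norm[X']{dJ(u_n)}(1 + \norm[X]{u_n}) \to 0$, that is a $(CPS)_\beta$--sequence. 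By the \wCPS{\beta} condition it would admit a subsequence $W$--converging to some $u \in K_J^\beta$, contradicting $K_J^\beta = \emptyset$. Only the \emph{existence} of such a limit point is needed, so the fact that the convergence is merely in $W$ causes no difficulty.

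Next I would build the deforming vector field. Since $J \in C^1(X,\R)$ and $dJ(u) \ne 0$ on the strip $\mathcal{S} = \set{u \in X : |J(u)-\beta| \le 2\eps}$, there is a locally Lipschitz pseudo-gradient field $V$ on a neighbourhood of $\mathcal{S}$ with $\norm[X]{V(u)} \le 2\norm[X']{dJ(u)}$ and $dJ(u)[V(u)] \ge \norm[X']{dJ(u)}^2$; in particular $\norm[X]{V(u)} \ge \norm[X']{dJ(u)} \ge \sigma/(1+\norm[X]{u}) > 0$ on $\mathcal{S}$, so $V/\norm[X]{V}$ is locally Lipschitz there. Choosing a locally Lipschitz cut-off $\chi : X \to [0,1]$ with $\chi \equiv 1$ on $\set{|J-\beta| \le \eps}$ and $\chi \equiv 0$ outside $\mathcal{S}$, I would set
\[
W(u)\ =\ -\,\chi(u)\,(1 + \norm[X]{u})\,\frac{V(u)}{\norm[X]{V(u)}},
\]
extended by $0$ where $\chi = 0$, so that $W$ is locally Lipschitz on all of $X$ and satisfies $\norm[X]{W(u)} \le 1 + \norm[X]{u}$.

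The linear-growth bound rules out finite-time blow-up, so the Cauchy problem $\eta' = W(\eta)$, $\eta(0,\cdot) = \mathrm{id}$, defines a flow $\eta : [0,\infty) \times X \to X$ whose time maps are homeomorphisms of $X$. Along trajectories,
\[
\frac{d}{dt}J(\eta) = dJ(\eta)[W(\eta)] \le -\tfrac12\,\chi(\eta)\,(1+\norm[X]{\eta})\,\norm[X']{dJ(\eta)} \le 0,
\]
so $J$ is non-increasing along the flow, and on $\set{|J-\beta|\le\eps}$, where $\chi \equiv 1$, the estimate above forces $\frac{d}{dt}J(\eta) \le -\sigma/2$. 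Setting $\psi = \eta(T,\cdot)$ with $T = 4\eps/\sigma$, for $u \in J^{\beta+\eps}$ either some trajectory value drops to $\beta-\eps$ (and stays below by monotonicity), or $\eta(t,u)$ remains in $\set{|J-\beta|\le\eps}$ for all $t \in [0,T]$, whence $J(\eta(T,u)) \le (\beta+\eps) - \frac{\sigma}{2}T = \beta - \eps$; either way $\psi(u) \in J^{\beta-\eps}$, giving (i). Finally $W$, and hence $\psi$, is the identity wherever $\chi = 0$, in particular on $\set{J \le \beta - 2\eps} \cup \set{J \ge \beta + 2\eps} \supset \set{J \le \beta - \bar\eps} \cup \set{J \ge \beta + \bar\eps}$ since $\bar\eps > 2\eps$, which is (ii).

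The delicate point is reconciling the two norms: the deformation must live in $X$, where $J$ is $C^1$ and a pseudo-gradient is available, while compactness holds only in $W$. The argument sidesteps this by invoking \wCPS{\beta} solely in the first step, where a contradiction is reached from the mere nonexistence of a critical limit point, so $W$--convergence is never used constructively; the rest of the construction is a classical flow carried out within $X$. The Cerami weight $(1+\norm[X]{u})$ is exactly what keeps $W$ of linear growth and the flow globally defined despite the gradient possibly degenerating at infinity.
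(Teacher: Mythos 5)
Your proof is correct, and it is essentially the argument behind the result the paper actually invokes: the paper does not prove this lemma itself but quotes it from \cite[Lemma 2.2]{MR4142333}, whose proof runs along the same lines. Both use the \wCPS{\beta} condition together with $K_J^\beta = \emptyset$ only to produce, by contradiction, a uniform lower bound for $\norm[X']{dJ(u)}\,(1+\norm[X]{u})$ on a strip around the level $\beta$, and then run a Cerami-weighted, cut-off pseudo-gradient flow in $X$ (in the spirit of Bartolo--Benci--Fortunato \cite{BBF}) whose time-$T$ map is the desired homeomorphism; your observation that the $W$-convergence in the \wCPS{\beta} condition is needed only to contradict the emptiness of $K_J^\beta$, never constructively, is precisely the point that makes the classical construction go through.
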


In particular, this lemma gives a minimizer when $J$ is bounded from below.

\begin{theorem} \label{Theorem 2.3}
If
\begin{equation} \label{2.1}
\beta := \inf_{u \in X}\ J(u) > - \infty
\end{equation}
and $J$ satisfies the \wCPS{\beta} condition, then $K_J^\beta \ne \emptyset$.
\end{theorem}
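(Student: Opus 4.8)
The plan is to argue by contradiction and to rely entirely on the deformation lemma (Lemma \ref{Lemma 2.1}). Suppose that $K_J^\beta = \emptyset$. By hypothesis $\beta = \inf_{u \in X} J(u) > -\infty$ and $J$ satisfies the \wCPS{\beta} condition, so the assumptions of Lemma \ref{Lemma 2.1} are fulfilled. Hence, fixing any $\bar\eps > 0$, there exist $0 < \eps < \bar\eps$ and a homeomorphism $\psi : X \to X$ such that $\psi(J^{\beta+\eps}) \subset J^{\beta-\eps}$.

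Next I would inspect the two sublevel sets appearing in this inclusion. On the one hand, since $\beta$ is the infimum of $J$, every $u \in X$ satisfies $J(u) \ge \beta$, so no element can satisfy $J(u) \le \beta - \eps$; thus $J^{\beta-\eps} = \emptyset$. On the other hand, by the very definition of the infimum there is at least one $u \in X$ with $J(u) < \beta + \eps$, so $J^{\beta+\eps} \ne \emptyset$. The contradiction is then immediate: as $\psi$ is a map and $J^{\beta+\eps}$ is nonempty, its image $\psi(J^{\beta+\eps})$ is nonempty, yet the inclusion $\psi(J^{\beta+\eps}) \subset J^{\beta-\eps} = \emptyset$ forces it to be empty. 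This is impossible, so the assumption $K_J^\beta = \emptyset$ must fail, and therefore $K_J^\beta \ne \emptyset$, as claimed.

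Since the deformation lemma is already available, there is no genuine analytic obstacle in this statement; all the real work has been absorbed into establishing the \wCPS{\beta} condition and Lemma \ref{Lemma 2.1}. The only point requiring minor care is the bookkeeping of the sublevel sets, namely confirming that $J^{\beta-\eps}$ is empty precisely because $\beta$ is the infimum while $J^{\beta+\eps}$ is nonempty by the approximating property of the infimum. Conceptually, the argument simply records the fact that a deformation satisfying $(i)$ of Lemma \ref{Lemma 2.1} cannot push a nonempty sublevel set strictly below the global infimum, so a minimizing configuration must in fact be a critical point at the level $\beta$.
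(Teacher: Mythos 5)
Your proof is correct and is essentially identical to the paper's own argument: both assume $K_J^\beta = \emptyset$, invoke Lemma \ref{Lemma 2.1} to obtain $\psi(J^{\beta+\eps}) \subset J^{\beta-\eps}$, and derive a contradiction from the fact that $J^{\beta+\eps} \ne \emptyset$ while $J^{\beta-\eps}$ must be empty since $\beta$ is the infimum. No issues.
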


\begin{proof}
Suppose $K_J^\beta = \emptyset$. Then Lemma \ref{Lemma 2.1} 
gives a constant $\eps > 0$ and a homeomorphism $\psi : X \to X$ 
such that $\psi(J^{\beta + \eps}) \subset J^{\beta - \eps}$. 
Since $J^{\beta + \eps} \ne \emptyset$ by \eqref{2.1}, then $J^{\beta - \eps} \ne \emptyset$, 
contradicting \eqref{2.1}.
\end{proof}

Lemma \ref{Lemma 2.1} also implies a generalized version of the Ambrosetti-Rabinowitz 
Mountain Pass Theorem (see \cite{AR}).

\begin{theorem} \label{Theorem 2.4}
Assume that there exist a neighborhood $U$ of the origin in $X$ 
and $u_1 \in X \setminus \closure{U}$ such that
\[
\max \set{J(0), J(u_1)}\ <\ \inf_{u \in \bdry{U}}\ J(u).
\]
Let
\[
\Gamma\ =\ \set{\gamma \in C([0,1],X) :\ \gamma(0) = 0,\, \gamma(1) = u_1}
\]
be the class of paths in $X$ joining $0$ and $u_1$, and set
\begin{equation} \label{2.2}
\beta := \inf_{\gamma \in \Gamma}\ \max_{u \in \gamma([0,1])}\ J(u)\ \ge\ \inf_{u \in \bdry{U}} J(u).
\end{equation}
If $J$ satisfies the \wCPS{\beta} condition, then $K_J^\beta \ne \emptyset$.
\end{theorem}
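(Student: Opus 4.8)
The plan is to argue by contradiction using the deformation Lemma \ref{Lemma 2.1}, in the spirit of the classical Ambrosetti--Rabinowitz proof. The essential observation is that the mountain pass geometry forces the strict inequality $\beta > \max\set{J(0), J(u_1)}$: indeed, combining the hypothesis with the estimate recorded in \eqref{2.2} one has
\[
\beta\ \ge\ \inf_{u \in \bdry{U}} J(u)\ >\ \max\set{J(0), J(u_1)},
\]
so the two endpoints $0$ and $u_1$ sit strictly below the level $\beta$.

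First I would suppose, for contradiction, that $K_J^\beta = \emptyset$. Using the strict inequality above, I would fix $\bar{\eps} > 0$ so small that $\beta - \bar{\eps} > \max\set{J(0), J(u_1)}$, which guarantees $J(0) < \beta - \bar{\eps}$ and $J(u_1) < \beta - \bar{\eps}$. Then Lemma \ref{Lemma 2.1}, applied with this $\bar{\eps}$, produces some $0 < \eps < \bar{\eps}$ and a homeomorphism $\psi : X \to X$ satisfying $(i)$ $\psi(J^{\beta+\eps}) \subset J^{\beta - \eps}$ and $(ii)$ $\psi(u) = u$ whenever $J(u) \le \beta - \bar{\eps}$ (or $J(u) \ge \beta + \bar{\eps}$).

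Next I would exploit the definition of $\beta$ as an infimum: there is a path $\gamma \in \Gamma$ whose maximum of $J$ lies below $\beta + \eps$, that is $\gamma([0,1]) \subset J^{\beta+\eps}$. Composing with the deformation, I set $\tilde{\gamma} := \psi \circ \gamma$. Since $J(0), J(u_1) < \beta - \bar{\eps}$, property $(ii)$ gives $\psi(0) = 0$ and $\psi(u_1) = u_1$, so $\tilde{\gamma}(0) = 0$ and $\tilde{\gamma}(1) = u_1$; moreover $\tilde{\gamma}$ is continuous because both $\psi$ and $\gamma$ are, whence $\tilde{\gamma} \in \Gamma$. By $(i)$,
\[
\tilde{\gamma}([0,1])\ =\ \psi(\gamma([0,1]))\ \subset\ \psi(J^{\beta+\eps})\ \subset\ J^{\beta-\eps},
\]
so $\max_{u \in \tilde{\gamma}([0,1])} J(u) \le \beta - \eps < \beta$. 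This contradicts the definition of $\beta$ in \eqref{2.2}, and therefore $K_J^\beta \ne \emptyset$.

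The only delicate point is the choice of $\bar{\eps}$ and the resulting endpoint-fixing property $(ii)$: the whole argument hinges on knowing that $\beta$ lies \emph{strictly} above the values of $J$ at the two endpoints, so that the deformed path $\tilde{\gamma}$ still joins $0$ to $u_1$ and remains admissible in $\Gamma$. This is precisely where the geometric hypothesis $\max\set{J(0), J(u_1)} < \inf_{\bdry{U}} J$ is indispensable, together with the fact already encoded in \eqref{2.2} that every path in $\Gamma$ must meet $\bdry{U}$ and hence cannot push the minimax level below $\inf_{\bdry{U}} J$. Everything else is a routine application of the deformation lemma.
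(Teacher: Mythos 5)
Your proof is correct and follows essentially the same route as the paper: contradiction via the deformation Lemma \ref{Lemma 2.1}, choosing $\bar{\eps}$ small enough (using $\beta \ge \inf_{\bdry{U}} J > \max\set{J(0),J(u_1)}$) so that the endpoints are fixed by $\psi$, then deforming a near-optimal path to contradict the minimax definition of $\beta$ in \eqref{2.2}. The only cosmetic difference is that you impose the strict inequality $\beta - \bar{\eps} > \max\set{J(0),J(u_1)}$ where the paper takes $\bar{\eps} \le \beta - \max\set{J(0),J(u_1)}$; both choices make property $(ii)$ of the lemma applicable at the endpoints.
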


\begin{proof}
Suppose $K_J^\beta = \emptyset$ and
let $0 < \bar{\eps} \le \beta - \max \set{J(0), J(u_1)}$. 
Then, a value $0 < \eps < \bar{\eps}$
and a homeomorphism $\psi : X \to X$ exist as in Lemma \ref{Lemma 2.1}. 
On the other hand,
by \eqref{2.2}, $\exists\, \gamma \in \Gamma$ such that $\max J(\gamma([0,1])) \le \beta + \eps$. \\
Now, let $\widetilde{\gamma} = \psi \comp \gamma$. 
Since $J(0), J(u_1) \le \beta - \bar{\eps}$, we have that
$\psi(0) = 0$ and $\psi(u_1) = u_1$, 
so $\widetilde{\gamma} \in \Gamma$. 
However, $\max J(\widetilde{\gamma}([0,1])) \le \beta - \eps$ since 
$\psi(J^{\beta + \eps}) \subset J^{\beta - \eps}$, contradicting \eqref{2.2}.
\end{proof}

%-------------------------------

\section{Preliminaries} 

From now on, let $\Omega \subset \R^N$ be an open bounded domain, $N\ge 2$,
and consider $1 < p < N$.
We denote by:
\begin{itemize}
\item $L^q(\Omega)$ the Lebesgue space with
norm $|u|_q = \left(\int_\Omega|u|^q dx\right)^{1/q}$ if $1 \le q < +\infty$;
\item $L^\infty(\Omega)$ the space of Lebesgue--measurable 
and essentially bounded functions $u :\Omega \to \R$ with norm
\[
|u|_{\infty} = \esssup_{\Omega} |u|;
\]
\item $W^{1,p}_0(\Omega)$ the classical Sobolev space with
norm $\|u\|_{W} = |\nabla u|_p$;
\item $|C|$ the usual Lebesgue measure of a measurable set $C$ in $\R^N$.
\end{itemize}

In order to investigate the existence of weak solutions  
of the nonlinear problem \eqref{1.2}, the notation introduced for the abstract 
setting at the beginning of Section \ref{abstract}
is referred to our problem with $W = W^{1,p}_0(\Omega)$ and the
Banach space $(X,\|\cdot\|_X)$ is defined as in Introduction,
i.e., 
\[
X = W^{1,p}_0(\Omega) \cap L^\infty(\Omega),\qquad
\|u\|_X = \|u\|_W + |u|_\infty.
\]
Here and in the following, $|\cdot|$ denotes 
the standard norm on any Euclidean space as the dimension
of the considered vector is clear and no ambiguity arises;
moreover, $(\eps_n)_n$ represents any infinitesimal sequence 
depending on a given sequence $(u_n)_n$.

From the definition of $X$, we have that 
$X \hookrightarrow W^{1,p}_0(\Omega)$ and $X \hookrightarrow L^\infty(\Omega)$
with continuous embeddings. 

If hypotheses \ref{h1} - \ref{h2} holds, then for all $s > \frac1p$
and $\mu \in \R$
the functional $\J : X \to \R$ defined as
\[
\begin{split}
\J(u)\ =\ &\frac{1}{p} \int_\Omega \left(A_0(x) + A(x)\ |u|^{ps}\right) |\nabla u|^p\ dx
 - \frac{\mu}{p (s + 1)}\ \int_\Omega |u|^{p (s+1)}\ dx\\[5pt]
& -\ \int_\Omega G(x,u)\ dx
\end{split}
\]
is well defined and of class $C^1$ in $X$ (see \cite[Proposition 3.2]{MR4142333})

Thus, weak solutions of problem \eqref{1.2} coincide 
with critical points of $\J$ in $X$, where
\[
\begin{split}
\langle d\J(u),v\rangle\ =\ &\int_\Omega \left[\left(A_0(x) + A(x)\ |u|^{ps}\right) 
|\nabla u|^{p-2} \nabla u \cdot \nabla v + s\ A(x)\ |u|^{ps-2} u\ v |\nabla u|^p \right] dx\\[5pt]
&-\ \mu \int_\Omega |u|^{p(s+1)-2} u\ v\ dx - 
\int_\Omega g(x,u)\ v\ dx \qquad \forall\, u,\ v \in X.
\end{split}
\]

As useful in the next proofs, 
let us point out the following technical lemmas
(for the first one, see \cite[Lemma 3.8]{MR4142333}).

\begin{lemma}\label{rellich}
Taking $1 < p < N$ and $s > \frac{1}{p}$, 
let $(u_n)_n \subset X$ be a sequence such that
\begin{equation}\label{c1}
\left( \int_\Omega (1+|u_n|^{p s})\ |\nabla u_n|^p dx\right)_n\quad \hbox{is bounded.} 
\end{equation}
Then, $u \in W^{1,p}_0(\Omega)$ exists such that 
$|u|^s u \in W^{1,p}_0(\Omega)$, too, and, up to subsequences, as $n\to+\infty$ we have
\begin{eqnarray}
&&u_n \rightharpoonup u\ \hbox{weakly in $W^{1,p}_0(\Omega)$,}
\label{c2}\\
&&|u_n|^s u_n \rightharpoonup |u|^s u\ \hbox{weakly in $W^{1,p}_0(\Omega)$,}
\label{c7}\\
&&u_n \to u\ \hbox{a.e. in $\Omega$,}
\label{c3}\\
&&u_n \to u\ \hbox{strongly in $L^m(\Omega)$ for each $m \in [1,p^*(s+1)[$.}
\label{c4}
\end{eqnarray}
\end{lemma}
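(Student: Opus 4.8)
The plan is to read off two separate boundedness statements from the single hypothesis \eqref{c1}, obtain weak limits for the two resulting bounded sequences, and then reconcile those limits using almost-everywhere convergence. First I would note that \eqref{c1} simultaneously bounds $\int_\Omega |\nabla u_n|^p dx$, so that $(u_n)_n$ is bounded in $W^{1,p}_0(\Omega)$, and $\int_\Omega |u_n|^{ps}|\nabla u_n|^p dx$, which by the identity \eqref{equal} is exactly $(s+1)^{-p}\|\,|u_n|^s u_n\|_W^p$; hence $|u_n|^s u_n \in W^{1,p}_0(\Omega)$ and $(|u_n|^s u_n)_n$ is bounded in $W^{1,p}_0(\Omega)$ as well.

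Since $1<p<N$, the space $W^{1,p}_0(\Omega)$ is reflexive, so along a subsequence I would get $u,w \in W^{1,p}_0(\Omega)$ with $u_n \wto u$ and $|u_n|^s u_n \wto w$ weakly in $W^{1,p}_0(\Omega)$; the first of these is \eqref{c2}. By the Rellich--Kondrachov theorem $W^{1,p}_0(\Omega)$ embeds compactly in $L^p(\Omega)$, so along a further subsequence $u_n \to u$ in $L^p(\Omega)$ and $u_n \to u$ a.e.\ in $\Omega$, giving \eqref{c3}. The crucial identification step is to pin down $w$: since $t \mapsto |t|^s t$ is continuous, \eqref{c3} yields $|u_n|^s u_n \to |u|^s u$ a.e., while compactness applied to the bounded sequence $(|u_n|^s u_n)_n$ gives $|u_n|^s u_n \to w$ a.e.\ along a subsequence. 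Uniqueness of a.e.\ limits forces $w = |u|^s u$, so $|u|^s u \in W^{1,p}_0(\Omega)$ and \eqref{c7} follows.

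It remains to upgrade a.e.\ convergence to the strong convergence \eqref{c4} for every $m < p^\ast(s+1)$. The key is that \eqref{c7} controls $u_n$ in a far higher Lebesgue space than $u_n \wto u$ alone: boundedness of $(|u_n|^s u_n)_n$ in $L^{p^\ast}(\Omega)$ means precisely that $(u_n)_n$ is bounded in $L^{p^\ast(s+1)}(\Omega)$, and likewise $u \in L^{p^\ast(s+1)}(\Omega)$. Hence for $m < p^\ast(s+1)$ the family $(|u_n-u|^m)_n$ is bounded in $L^{p^\ast(s+1)/m}(\Omega)$ with exponent strictly larger than $1$; as $|\Omega|<\infty$, this bound yields uniform integrability. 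Combining uniform integrability with $|u_n-u|^m \to 0$ a.e.\ and invoking Vitali's convergence theorem gives $\int_\Omega |u_n-u|^m dx \to 0$, which is \eqref{c4}.

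I expect the main obstacle to be the exponent bookkeeping in the last step: one must recognize that the only source of an $L^{p^\ast(s+1)}$ bound on $(u_n)_n$ is the second piece of \eqref{c1}, routed through \eqref{equal} and the Sobolev embedding, and that this is exactly what makes Vitali's theorem applicable up to, but not including, the threshold $p^\ast(s+1)$. The reconciliation of the two weak limits in \eqref{c7} is conceptually delicate but becomes routine once the a.e.\ convergence \eqref{c3} is available.
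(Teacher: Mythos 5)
Your proof is correct. Note that the paper itself gives no argument for this lemma --- it simply cites \cite[Lemma 3.8]{MR4142333} --- so there is nothing internal to compare against line by line; your proposal supplies exactly the standard argument that the citation points to: the two boundedness statements extracted from \eqref{c1} via \eqref{equal}, weak limits by reflexivity, a.e.\ convergence by Rellich--Kondrachov, identification of the second weak limit through uniqueness of a.e.\ limits, and the Sobolev bound $\left(|u_n|^s u_n\right)_n \subset L^{p^*}(\Omega)$, i.e.\ $(u_n)_n \subset L^{p^*(s+1)}(\Omega)$, to push the strong convergence past the Rellich threshold $p^*$. Your final step via uniform integrability and Vitali's theorem is a legitimate (and arguably cleaner) alternative to the route more commonly taken in this literature, which applies Rellich directly to the bounded sequence $\left(|u_n|^s u_n\right)_n$ to get strong $L^q$ convergence for $q < p^*$ and then transfers it back to $u_n$ through the elementary inequality $\bigl| |a|^s a - |b|^s b \bigr| \ge c_s |a-b|^{s+1}$; both arguments cover the full range $m \in [1, p^*(s+1)[$ and rest on the same key observation about where the $L^{p^*(s+1)}$ bound comes from.
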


\begin{lemma}\label{upperG}
Assume that \ref{h2} holds. 
Then, fixing any $\mu$, $\widetilde{\mu}\in \R$
with $\widetilde{\mu} > \mu$, a constant $a_{\mu, \widetilde{\mu}}> 0$
(which depends on $\mu$ and $\widetilde{\mu}$) exists so that 
\[
\int_\Omega G(x,u) dx\ \le\ \frac{\widetilde{\mu} - \mu}{p (s + 1)}\ 
\int_\Omega |u|^{p (s+1)}\ dx + a_{\mu, \widetilde{\mu}}
\qquad \hbox{for all $u \in X$.}
\]
\end{lemma}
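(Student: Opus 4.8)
The plan is to reduce the statement to an elementary pointwise inequality exploiting the strict exponent gap $r < p(s+1)$ from \ref{h2}. First I would integrate the primitive bound \eqref{1.4} over $\Omega$: since $u \in X \subset L^\infty(\Omega)$ and $\Omega$ is bounded, every integral below is finite and
\[
\int_\Omega G(x,u)\, dx\ \le\ a_3\, \abs{\Omega}\ +\ a_4 \int_\Omega |u|^r\, dx.
\]
It then remains to absorb the subcritical term $\int_\Omega |u|^r\, dx$ into a small multiple of $\int_\Omega |u|^{p(s+1)}\, dx$ plus a constant, the ``small multiple'' being exactly the prescribed coefficient $(\widetilde{\mu} - \mu)/(p(s+1))$.

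The key step is a Young-type absorption. Writing $\delta := (\widetilde{\mu} - \mu)/(p(s+1))$, which is strictly positive by hypothesis, I would observe that, because $1 \le r < p(s+1)$, the function $t \mapsto a_4 |t|^r - \delta |t|^{p(s+1)}$ is continuous on $\R$ and tends to $-\infty$ as $|t| \to \infty$; hence it attains a finite maximum $C_\delta \ge 0$, giving the uniform inequality
\[
a_4\, |t|^r\ \le\ \delta\, |t|^{p(s+1)} + C_\delta\qquad \hbox{for all $t \in \R$.}
\]
(Equivalently, one may apply Young's inequality with the conjugate exponents $p(s+1)/r$ and its dual.) Evaluating this at $t = u(x)$ for a.e.\ $x \in \Omega$ and integrating yields
\[
a_4 \int_\Omega |u|^r\, dx\ \le\ \delta \int_\Omega |u|^{p(s+1)}\, dx + C_\delta\, \abs{\Omega}.
\]

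Combining the two displays and setting $a_{\mu,\widetilde{\mu}} := a_3\, \abs{\Omega} + C_\delta\, \abs{\Omega}$ delivers the claimed bound, with the constant depending only on $\mu$, $\widetilde{\mu}$ (through $\delta$) and on the structural data $a_3, a_4, r, \abs{\Omega}$, but not on $u$. I do not anticipate any genuine obstacle here: the only place where the hypotheses are truly used is the strict inequality $r < p(s+1)$, which is precisely what makes the absorption constant $C_\delta$ finite; the uniformity in $u$ is automatic, since the pointwise inequality holds for every real value. The one routine check worth flagging is the finiteness of the integrals, which follows at once from $u \in L^\infty(\Omega)$ on the bounded set $\Omega$.
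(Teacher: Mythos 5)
Your proof is correct and follows essentially the same route as the paper: both start from the integrated bound \eqref{1.4} and then absorb the subcritical term $a_4\int_\Omega|u|^r\,dx$ into $\frac{\widetilde{\mu}-\mu}{p(s+1)}\int_\Omega|u|^{p(s+1)}\,dx$ plus a constant, using a Young-type inequality made possible by the strict gap $r < p(s+1)$. The only cosmetic difference is that you perform the absorption pointwise in $t$ and then integrate over the bounded domain, while the paper first applies H\"older's inequality and then Young's inequality to the resulting integral quantity; the two are interchangeable here and produce a constant of the same form.
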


\begin{proof}
Fix any $u \in X$. Since $1 \le r < p (s + 1)$, from \eqref{1.4}, 
H\"{o}lder's inequality, and the Young's inequality 
it follows that
\[
\begin{split}
\int_\Omega G(x,u) dx\ &\le\ a_3 |\Omega| + a_4 \int_\Omega |u|^{r}\ dx\
\le\ a_3 |\Omega| + a_6 \left(\int_\Omega |u|^{p (s+1)}\ dx\right)^{\frac{r}{p (s+1)}}\\
&\le\ \frac{\widetilde{\mu} - \mu}{p (s + 1)}\ 
\int_\Omega |u|^{p (s+1)}\ dx + a_{\mu, \widetilde{\mu}}
\end{split}
\]
for a suitable constant $a_6> 0$, coming from H\"{o}lder's inequality,
which is independent of $\mu$ and $\widetilde{\mu}$,
and a constant $a_{\mu, \widetilde{\mu}}> 0$,
coming from Young's inequality, which depends on $\frac{\widetilde{\mu} - \mu}{p (s + 1)} > 0$.
\end{proof}

Now, we are ready to verify that $\J$ satisfies the \wCPS{\beta} condition 
in $X$ for any $\beta \in\R$.

\begin{lemma} \label{Lemma 3.1}
Assume that \ref{h1} and \ref{h2} hold. Then, $\J$ satisfies 
the \wCPS{\beta} condition in $X$ for all $\beta \in \R$ 
in each of the following cases:
\begin{enumroman}
\item \label{Lemma 3.1.i} $\mu < \mu_1$,
\item \label{Lemma 3.1.ii} $\mu \ge \mu_1$ and \ref{h4} holds.
\end{enumroman}
\end{lemma}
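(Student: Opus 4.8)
The plan is to show that any \CPS{\beta}--sequence $(u_n)_n \subset X$ is bounded in the sense required by Lemma~\ref{rellich}, extract the weak limit $u$ provided by that lemma, and then upgrade weak convergence to the strong $W^{1,p}_0(\Omega)$--convergence demanded by the \wCPS{\beta} definition, checking along the way that the limit $u$ is a critical point at level $\beta$.

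First I would establish the fundamental boundedness estimate
\[
\left(\int_\Omega \bigl(1+|u_n|^{ps}\bigr)\,|\nabla u_n|^p\,dx\right)_n \quad \text{is bounded.}
\]
The natural way is to combine the energy bound $\J(u_n) \to \beta$ with the derivative information. The useful combination is $p(s+1)\,\J(u_n) - \langle d\J(u_n), u_n\rangle$, where the leading quasilinear terms partially cancel: using \eqref{equal} one sees that the $A(x)|u|^{ps}|\nabla u|^p$ contributions from $\J$ and from $\langle d\J(u),u\rangle$ are scaled so that the full coefficient $A(x)$ block disappears, the pure $\mu$--term cancels exactly, and what survives is a positive multiple of $\int_\Omega A_0(x)|\nabla u_n|^p\,dx$ (from the mismatch between the $1/p$ and $1/(p(s+1))$ weights on the $A_0$ part) plus the remainder $\int_\Omega H(x,u_n)\,dx$ coming from $g$. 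Because $\langle d\J(u_n),u_n\rangle = \norm[X']{d\J(u_n)}\,\O(\norm[X]{u_n})$ and the \CPS{} scaling gives $\norm[X']{d\J(u_n)}\,\norm[X]{u_n}\to 0$, the derivative term is controlled. In case~\ref{Lemma 3.1.i} ($\mu<\mu_1$) I would instead directly estimate $\J(u_n)$ from below: apply Lemma~\ref{upperG} with some $\widetilde\mu \in (\mu,\mu_1)$ to absorb $\int_\Omega G(x,u_n)\,dx$ into the $\mu$--term, so that
\[
\J(u_n) \ge \frac{1}{p}\int_\Omega A_0|\nabla u_n|^p\,dx + \frac{1}{p(s+1)}\Bigl(\mu_1 - \widetilde\mu\Bigr)\int_\Omega |u_n|^{p(s+1)}\,dx - a_{\mu,\widetilde\mu},
\]
where the definition of $\mu_1 = \inf S$ and \eqref{equal} bound $(s+1)\int A|u_n|^{ps}|\nabla u_n|^p \ge \mu_1\int |u_n|^{p(s+1)}$, and $\widetilde\mu<\mu_1$ makes the second term nonnegative; coercivity of $\int A_0|\nabla u_n|^p$ then gives \eqref{c1}. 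In case~\ref{Lemma 3.1.ii}, I would use \ref{h4} to bound $\int_\Omega H(x,u_n)\,dx \le \vartheta\int_\Omega |u_n|^p\,dx + a_5|\Omega|$ and absorb $\vartheta\int|u_n|^p$ using $\int_\Omega A_0|\nabla u_n|^p \ge \nu_1\int_\Omega|u_n|^p$ with $\vartheta<\nu_1$, again yielding \eqref{c1}.

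With \eqref{c1} in hand, Lemma~\ref{rellich} supplies $u$ with \eqref{c2}--\eqref{c4}. The main obstacle, and the technical heart of the argument, is passing to the limit in the quasilinear operator to conclude $u\in K_J^\beta$ and strong convergence, since $\J$ is not of class $C^1$ on all of $W^{1,p}_0(\Omega)$ and the test function $u_n$ itself is only bounded in $X$ in the weighted sense of \eqref{c1}, not in $L^\infty$. The strategy I would follow is to test $\langle d\J(u_n), u_n - u\rangle \to 0$ (legitimate once one checks $u\in X$, or more carefully by a truncation/density argument), subtract the analogous expression with the limit operator evaluated at $u$, and use the strong convergence \eqref{c3}--\eqref{c4} together with the weak convergences \eqref{c2} and \eqref{c7} to kill all lower-order and cross terms. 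This reduces matters to a monotonicity inequality of the form
\[
\int_\Omega \bigl(A_0(x)+A(x)|u_n|^{ps}\bigr)\bigl(|\nabla u_n|^{p-2}\nabla u_n - |\nabla u|^{p-2}\nabla u\bigr)\cdot\nabla(u_n-u)\,dx \to 0,
\]
whose integrand is nonnegative by the standard $p$--monotonicity inequality; a Fatou/pointwise-a.e.\ argument on the gradients then forces $\nabla u_n \to \nabla u$ a.e., and combined with uniform integrability one upgrades to $\norm[W]{u_n-u}\to 0$, which is exactly the convergence required. Finally, passing to the limit in $\langle d\J(u_n),v\rangle$ for fixed $v\in X$ shows $d\J(u)=0$, and $\J(u)=\beta$ follows from the strong $W$--convergence and the lower-order strong convergences, so $u\in K_J^\beta$ and the \wCPS{\beta} condition holds.
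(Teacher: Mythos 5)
Your boundedness argument for case \ref{Lemma 3.1.ii} has a genuine gap. The combination $p(s+1)\,\J(u_n)-\langle d\J(u_n),u_n\rangle$ cancels the $A(x)|u_n|^{ps}|\nabla u_n|^p$ block and the $\mu$--term \emph{exactly} (that is precisely why it is useful), so it can only ever yield information on $\int_\Omega A_0(x)|\nabla u_n|^p dx$: together with \ref{h4} and $\vartheta<\nu_1$ it gives boundedness of $(u_n)_n$ in $W^{1,p}_0(\Omega)$, i.e.\ \eqref{bdd0}, and nothing more. That is strictly weaker than \eqref{c1}: since $(u_n)_n$ is not uniformly bounded in $L^\infty(\Omega)$, a bound on $\int_\Omega|\nabla u_n|^p dx$ does not bound $\int_\Omega|u_n|^{ps}|\nabla u_n|^p dx$. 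Nor can you recover the missing bound from the energy identity: estimating $\int_\Omega|u_n|^{p(s+1)}dx\le\frac{s+1}{\mu_1}\int_\Omega A(x)|u_n|^{ps}|\nabla u_n|^p dx$ produces the absorption factor $\mu/\mu_1\ge 1$ (this is exactly the regime $\mu\ge\mu_1$), and the Sobolev embedding is of no help because $p(s+1)$ may exceed $p^\ast$ (the problem is allowed to be supercritical when $s>p/(N-p)$). This is why the paper, after proving \eqref{bdd0}, runs a separate contradiction argument: assuming $\rho_n:=\big(\int_\Omega A(x)|u_n|^{ps}|\nabla u_n|^p dx\big)^{1/(p(s+1))}\to+\infty$, it rescales $\widetilde u_n=u_n/\rho_n$, notes that $\widetilde u_n\to 0$ strongly in $W^{1,p}_0(\Omega)$ while $\int_\Omega A(x)|\widetilde u_n|^{ps}|\nabla\widetilde u_n|^p dx=1$, applies Lemma \ref{rellich} to $(\widetilde u_n)_n$, and divides \eqref{3.1} by $\rho_n^{p(s+1)}$ to force $\int_\Omega A(x)|\widetilde u_n|^{ps}|\nabla\widetilde u_n|^p dx\to 0$, a contradiction. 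Without this (or an equivalent) step your proof of case \ref{Lemma 3.1.ii} never reaches \eqref{c1}.

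Case \ref{Lemma 3.1.i} contains the same conflation, though there it is reparable in one line: your displayed inequality bounds \emph{both} $\int_\Omega A_0(x)|\nabla u_n|^p dx$ and $\int_\Omega|u_n|^{p(s+1)}dx$ (both terms are nonnegative), but the ``coercivity'' conclusion you draw from the first term alone does not give \eqref{c1}. Either feed the bound on $\int_\Omega|u_n|^{p(s+1)}dx$ back into the inequality prior to absorption, or, as the paper does, absorb only the fraction $\widetilde\mu/\mu_1$ of the $A$--term so that the positive multiple $\frac1p\big(1-\frac{\widetilde\mu}{\mu_1}\big)\int_\Omega A(x)|u_n|^{ps}|\nabla u_n|^p dx$ survives on the right-hand side. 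Finally, your concluding monotonicity/strong-convergence sketch is in the right spirit, but note that the paper does not reprove this step: it invokes the arguments of \cite[Proposition 3.10]{MR4142333}, where in particular $u\in L^\infty(\Omega)$ is established; your sketch leaves that point open, and testing with $u_n-u$ is not legitimate until it is settled.
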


\begin{proof}
Let $\beta \in \R$ and let $\seq{u_n}_n \subset X$ be a \CPS{\beta} sequence. 
Then, we have that
\begin{equation}
\begin{split}
 \beta + \eps_n \ =\ \J(u_n) \ =\ 
&\frac{1}{p} \int_\Omega \left(A_0(x) + A(x)\ |u_n|^{ps}\right) |\nabla u_n|^p\ dx \\
& - \frac{\mu}{p (s + 1)} \int_\Omega |u_n|^{p (s+1)}\ dx \ -\ \int_\Omega G(x,u_n)\ dx 
\end{split}
\label{3.1}
\end{equation}
and
\begin{equation}
\begin{split}
\eps_n\ =\ \langle d\J(u_n),u_n\rangle\
= &\int_\Omega \left(A_0(x) + (s + 1)\ A(x)\ |u_n|^{ps}\right) |\nabla u_n|^p\ dx\\
& - \mu \int_\Omega |u_n|^{p (s+1)}\ dx\ -\ \int_\Omega g(x,u_n)\ u_n\ dx . 
\end{split}
\label{3.2}
\end{equation}
The first step of the proof is showing that \eqref{c1} holds, 
so we can apply Lemma \ref{rellich}. To this aim, we will consider the two different 
conditions.

\ref{Lemma 3.1.i} Fix $\widetilde{\mu} \in\ ]\mu,\mu_1[$.
Combining Lemma \ref{upperG} with \eqref{3.1} and \eqref{1.3} we have that
\[
\begin{split}
 \beta + \eps_n \ \ge\ & \frac{1}{p} \int_\Omega \left(A_0(x) + A(x)\ |u_n|^{ps}\right) |\nabla u_n|^p\ dx 
- a_{\mu, \widetilde{\mu}} 
- \frac{\widetilde{\mu}}{p (s + 1)} \int_\Omega |u_n|^{p (s+1)}\ dx \\
\ge\ & \frac{1}{p} \int_\Omega A_0(x)\ |\nabla u_n|^p\ dx 
+\ \frac{1}{p} \left(1 - \frac{\widetilde{\mu}}{\mu_1}\right)\ \int_\Omega A(x)\ |u_n|^{ps} |\nabla u_n|^p\ dx 
- a_{\mu, \widetilde{\mu}}. 
\end{split}
\]
Thus, a constant $a_7= a_7(\mu, \widetilde{\mu}) > 0$ exists so that
\[
\int_\Omega A_0(x)\ |\nabla u_n|^p dx + 
\left(1 - \frac{\widetilde{\mu}}{\mu_1}\right) \int_\Omega A(x)\ |u_n|^{ps}\ |\nabla u_n|^p\ dx\ \le \ a_7
\]
for all $n \in \N$, which together with \ref{h1} gives the desired conclusion \eqref{c1}.

\ref{Lemma 3.1.ii} First we show that 
\begin{equation}
\label{bdd0}
\left(\int_\Omega |\nabla u_n|^p\ dx\right)_n\qquad
\hbox{is bounded,}
\end{equation}
or better, by \ref{h1}, it suffices to show that 
\begin{equation}
\label{bdd1}
\left(\int_\Omega A_0(x)\ |\nabla u_n|^p\ dx\right)_n\qquad
\hbox{is bounded.}
\end{equation}
Combining \eqref{3.1} and \eqref{3.2} as $\langle d\J(u_n),u_n\rangle - p(s+1) \J(u_n)$ 
gives
\[
\eps_n - p(s+1) \beta\ =\ -\ s\ \int_\Omega A_0(x)\ |\nabla u_n|^p\ dx 
+ s\ \int_\Omega H(x,u_n)\ dx;
\] 
hence, from \ref{h4} and \eqref{1.3} it follows that
\begin{equation} \label{3.4}
\begin{split}
\int_\Omega A_0(x)\ |\nabla u_n|^p\ dx \ &\le\ \int_\Omega H(x,u_n)\ dx + a_8\
\le \
\vartheta \int_\Omega |u_n|^p\ dx + a_9 \\
&\le\ 
\frac{\vartheta}{\nu_1} \int_\Omega A_0(x)\ |\nabla u_n|^p\ dx + a_9
\end{split}
\end{equation}
for some constants $a_8$, $a_9 > 0$. Since $\vartheta < \nu_1$, \eqref{3.4} implies \eqref{bdd1}.\\
Now, we show that 
\[
\left(\int_\Omega |u_n|^{ps}\ |\nabla u_n|^p\ dx\right)_n\qquad
\hbox{is also bounded.}
\]
By \ref{h1} again, it suffices to show that
$\ (\rho_n)_n\ $ is bounded, with 
\[
\rho_n := \left(\int_\Omega A(x)\ |u_n|^{ps}\ |\nabla u_n|^p\ dx\right)^{\frac{1}{p(s+1)}}.
\]
Arguing by contradiction, suppose 
\begin{equation} \label{unbdd2}
\rho_n\ \to\ + \infty
\end{equation}
for a renamed subsequence, and set 
\[
\widetilde{u}_n = \frac{u_n}{\rho_n}.
\]
Since \eqref{bdd0} holds, from \eqref{unbdd2}
we have that 
\begin{equation} \label{unbdd2-1}
 \widetilde{u}_n \to 0\qquad \hbox{strongly in $W^{1,p}_0(\Omega)$.}
\end{equation}
On the other hand, since
\begin{equation} \label{3.5}
\int_\Omega A(x)\ |\widetilde{u}_n|^{ps}\ |\nabla \widetilde{u}_n|^p\ dx = 1 \qquad \hbox{for all $n\in \N$,}
\end{equation}
from \ref{h1} it follows that 
\[
\left(\int_\Omega |\widetilde{u}_n|^{ps}\ |\nabla \widetilde{u}_n|^p\ dx\right)_n\qquad
\hbox{is bounded.}
\]
Hence, sequence $(\widetilde{u}_n)_n$ satisfies the boundedness condition
\eqref{c1} and from Lemma \ref{rellich} a renamed subsequence of $\seq{\widetilde{u}_n}_n$ 
converges to some $\widetilde{u}$ weakly in $W^{1,p}_0(\Omega)$ and strongly 
in $L^m(\Omega)$ for all $m \in [1,p^\ast (s + 1)[$. 
From \eqref{unbdd2-1}, it has to be $\widetilde{u} = 0$. 
Thus, being $1 \le r < p (s + 1) < p^\ast (s + 1)$, we have that
\[
\frac{1}{\rho_n^{p (s+1)}} \ \int_\Omega |u_n|^{p(s+1)}\ dx
\ =\ \int_\Omega |\widetilde{u}_n|^{p(s+1)}\ dx\ =\ \eps_n,
\] 
while from \eqref{1.4} and \eqref{unbdd2} it follows that
\[
\frac{1}{\rho_n^{p (s+1)}} \left|\int_\Omega G(x,u_n)\ dx\right|\
\le\
\frac{a_3\ |\Omega|}{\rho_n^{p (s+1)}} + \frac{a_4}{\rho_n^{p (s+1) - r}}\
 \int_\Omega |\widetilde{u}_n|^r\ dx\ =\ \eps_n,
\] 
and from \eqref{bdd1} and \eqref{unbdd2} we obtain
\[
\frac{1}{\rho_n^{p (s+1)}} \int_\Omega A_0(x)\ |\nabla u_n|^p\ dx\ =\ \eps_n.
\] 
Hence, summing up all the previuos information, 
from dividing \eqref{3.1} by $\rho_n^{p (s+1)}$ and from \eqref{unbdd2}
it follows that 
\[
\int_\Omega A(x)\ |\widetilde{u}_n|^{ps}\ |\nabla \widetilde{u}_n|^p\ dx \to 0
\]
contradicting \eqref{3.5}.\\
So, also in this case the boundedness condition \eqref{c1} holds.

Now, in both cases \ref{Lemma 3.1.i} and \ref{Lemma 3.1.ii}, boundedness condition \eqref{c1} 
allows us to apply Lemma \ref{rellich} and a function $u \in W^{1,p}_0(\Omega)$ 
exists such that $|u|^s u \in W^{1,p}_0(\Omega)$
and \eqref{c2}--\eqref{c4} hold, up to subsequences.\\
Then, by the arguments in the proof of \cite[Proposition 3.10]{MR4142333}
it follows $u \in L^\infty(\Omega)$ and also $\|u_n - u\|_W \to 0$
and $\J(u) = \beta$, $d\J(u) = 0$.
\end{proof}

%-------------------

\section{Proofs of the main theorems} \label{Section 3}

Now, we are ready to prove Theorem \ref{Theorem 1.1}.

\begin{proof}[Proof of Theorem \ref{Theorem 1.1}]
As in the proof of Lemma \ref{Lemma 3.1} \ref{Lemma 3.1.i}, 
fix $\widetilde{\mu} \in\ ]\mu,\mu_1[$ and note that,
by combining Lemma \ref{upperG} with \eqref{1.3}, for any $u \in X$
we have that
\[
\J(u) \ge \frac{1}{p} \int_\Omega A_0(x)\ |\nabla u|^p\ dx\ 
+\ \frac{1}{p}\ \left(1 - \frac{\widetilde{\mu}}{\mu_1}\right)\ 
\int_\Omega A(x)\ |u|^{ps}\ |\nabla u|^p\ dx - a_{\mu,\widetilde{\mu}}
\]
for a constant $a_{\mu,\widetilde{\mu}} > 0$. So, \ref{h1} implies that
\[
\beta := \inf_{u \in X} \J(u) > - \infty.
\]
Since $\J$ satisfies the \wCPS{\beta} condition by Lemma \ref{Lemma 3.1} \ref{Lemma 3.1.i}, 
then Theorem \ref{Theorem 2.3} applies and $K_\J^\beta \ne \emptyset$.\\
We will complete the proof by showing that $\beta < 0$ and, hence, $0 \notin K_\J^\beta$.\\
Fix $\widetilde{\nu} \in\ ]\nu_1,\nu[$. 
We note that \ref{h3} implies
\[
\frac{G(x,t) - \frac{\nu}{p} |t|^p}{\frac{1}{p} |t|^p}\ \to \ 0 \quad \hbox{as $ t \to 0$,
uniformly a.e. in $\Omega$,}
\]
then, $\delta = \delta(\widetilde{\nu}) > 0$ exists such that
\begin{equation} \label{3.6}
G(x,t) \ge \frac{\widetilde{\nu}}{p}\ |t|^p\qquad
\hbox{for $|t| < \delta$ and a.e. $x \in \Omega$.}
\end{equation}
Now, let us consider $\varphi_1 \in W^{1,p}_0(\Omega)$
eigenfunction of $- \Delta_p$ in $W^{1,p}_0(\Omega)$ 
which achieves the eigenvalue $\lambda_1$ defined in \eqref{first}.
It is known that it is the unique function such that
\[
\varphi_1 > 0,\quad \int_{\Omega} |\varphi_1|^p dx = 1\quad\hbox{and}\quad  
\int_{\Omega} |\nabla \varphi_1|^p dx = \lambda_1
\]
(see, e.g., \cite{Lin}). Furthermore, it is also
$\varphi_1 \in L^\infty(\Omega)$, hence $\varphi_1 \in X$,
and from \ref{h1} it results
\[
\int_\Omega A_0(x)\ |\nabla \varphi_1|^p\ dx \ge \alpha_0 \lambda_1 > 0.
\]
Taking $\sigma > 0$ so that $0 < \sigma\ |\varphi_1|_\infty < \delta$,
from \eqref{3.6} and \eqref{1.3} it follows 
\[
\begin{split}
\J(\sigma \varphi_1)\ \le\ 
&\frac{\sigma^p}{p} \int_\Omega A_0(x)\ |\nabla \varphi_1|^p\ dx\ 
-\ \frac{\widetilde{\nu}}{p}\ \sigma^p\\
& + \frac{\sigma^{p (s+1)}}{p}
\ \left(\int_\Omega A(x)\ |\varphi_1|^{ps}\ |\nabla \varphi_1|^p\ dx
-\ \frac{\mu}{s + 1} \int_\Omega |\varphi_1|^{p (s+1)}\ dx\right) \\
\le\ &-\ \frac{\sigma^p}{p} \left(\frac{\widetilde{\nu}}{\nu_1} - 1\right) 
\ \int_\Omega A_0(x)\ |\nabla \varphi_1|^p\ dx \\
& + \frac{\sigma^{p (s+1)}}{p}
\ \left(\int_\Omega A(x)\ |\varphi_1|^{ps}\ |\nabla \varphi_1|^p\ dx
-\ \frac{\mu}{s + 1} \int_\Omega |\varphi_1|^{p (s+1)}\ dx\right);
\end{split}
\]
so, $\J(\sigma \varphi_1) < 0$ for all sufficiently small $\sigma > 0$. 
Hence, $\beta < 0$.
\end{proof}

Next, we prove a technical lemma useful for verifying the mountain pass geometry
we need for the proof of Theorem \ref{Theorem 1.2}. 

Let
\[
L: u \in X \  \mapsto\ L(u) = \int_\Omega \left(A_0(x) + A(x)\ |u|^{ps}\right) |\nabla u|^p\ dx \in \R.
\]
From \ref{h1}, the map $L : X \to \R$ is continuous with 
\[
L(u) \ge 0\quad \hbox{for all $u \in X$, and}
\quad L(u) =0\ \iff\ u = 0.
\]  
Then, for each $\rho > 0$ the set
\[
U_\rho = \set{u \in X :\ L(u) < \rho}
\]
is a neighborhood of the origin in $X$; 
moreover, for any $u_0 \in X \setminus \set{0}$ 
a sufficiently large $\sigma_*(u_0,\rho) > 0$ exists 
so that 
\[
L(\sigma u_0) > \rho\quad \hbox{for all}\; \sigma > \sigma_*(u_0,\rho),
\]
and, hence, 
\begin{equation} \label{mp0}
\sigma u_0\ \in\ X \setminus \closure{U}_\rho
\quad \hbox{for all}\; \sigma > \sigma_*(u_0,\rho).
\end{equation}

\begin{lemma} \label{Lemma 3.2}
Let
\[
I(u)\ =\ \frac{1}{p} \int_\Omega \left(A_0(x) + A(x)\ |u|^{ps}\right) |\nabla u|^p\ dx 
- \int_\Omega F(x,u)\ dx, \quad u \in X,
\]
where $F: \Omega \times \R \to \R$ 
is a Carath\'{e}odory function satisfying the growth condition
\begin{equation} \label{3.7}
|F(x,t)|\ \le\ a_{10} + a_{11}\ |t|^{p^\ast(s+1)} \quad 
\text{for a.e. } x \in \Omega, \text{ all } t \in \R
\end{equation}
for some constants $a_{10}, a_{11} > 0$. 
If
\begin{equation} \label{3.8}
\nu\ :=\ \limsup_{t \to 0}\ \frac{p\ F(x,t)}{|t|^p} < \nu_1 \quad \text{uniformly a.e.\! in } \Omega,
\end{equation}
then a radius $\rho_* > 0$ exists so that 
\[
\inf_{u \in \bdry{U_\rho}} I(u) > 0\qquad \hbox{for all $0 < \rho < \rho_*$.}
\]
\end{lemma}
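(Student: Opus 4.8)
The plan is to show that on the boundary $\partial U_\rho$, where necessarily $L(u) = \rho$ since $L$ is continuous and $U_\rho = \set{L < \rho}$ is open, the functional $I$ is bounded below by an expression of the form $c_0\, \rho - C\, \rho^{p^\ast/p}$ with $c_0 > 0$ and exponent $p^\ast/p > 1$, so that positivity follows for all small $\rho$.

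First I would extract a usable pointwise bound on $F$. Fixing any $\bar{\nu}$ with $\max\set{\nu,0} < \bar{\nu} < \nu_1$, which is possible because $\nu_1 > 0$ and $\nu < \nu_1$, condition \eqref{3.8} furnishes $\delta > 0$ such that $F(x,t) \le \frac{\bar{\nu}}{p}\, |t|^p$ for $|t| < \delta$ and a.e.\ $x \in \Omega$. For $|t| \ge \delta$ I would use the growth bound \eqref{3.7}, absorbing the constant via $a_{10} \le a_{10}\, \delta^{-p^\ast(s+1)}\, |t|^{p^\ast(s+1)}$. Since $\bar{\nu} > 0$ makes $\frac{\bar{\nu}}{p}|t|^p$ nonnegative, these two regimes merge into a single global inequality $F(x,t) \le \frac{\bar{\nu}}{p}\, |t|^p + C\, |t|^{p^\ast(s+1)}$ valid for all $t$ and a.e.\ $x$, for a suitable constant $C > 0$.

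Integrating this and controlling the two resulting integrals is the heart of the argument. For the subcritical term, the Rayleigh characterization \eqref{1.3} gives $\int_\Omega |u|^p\, dx \le \nu_1^{-1} \int_\Omega A_0(x)\, |\nabla u|^p\, dx \le \nu_1^{-1} L(u)$. The delicate term is $\int_\Omega |u|^{p^\ast(s+1)}\, dx$, which is genuinely supercritical and cannot be controlled by a Sobolev embedding applied to $u$ itself; I expect this to be the main obstacle. I would resolve it through the change of variable recorded in \eqref{equal}: since $|u|^s u \in W^{1,p}_0(\Omega)$ with $|\nabla(|u|^s u)|^p = (s+1)^p\, |u|^{ps}\, |\nabla u|^p$, hypothesis \ref{h1} yields $\int_\Omega |\nabla(|u|^s u)|^p\, dx \le \frac{(s+1)^p}{\alpha_0}\, L(u)$, and the Sobolev inequality $W^{1,p}_0(\Omega) \hookrightarrow L^{p^\ast}(\Omega)$ applied to $|u|^s u$, together with the identity $\big||u|^s u\big|^{p^\ast} = |u|^{p^\ast(s+1)}$, gives $\int_\Omega |u|^{p^\ast(s+1)}\, dx \le C'\, L(u)^{p^\ast/p}$ for a constant $C'$ depending only on $s$, $\alpha_0$, and the Sobolev constant.

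Finally I would assemble these estimates on $\partial U_\rho$. Using $L(u) = \rho$ and $I(u) = \frac{1}{p} L(u) - \int_\Omega F(x,u)\, dx$, the bounds above give
\[
I(u)\ \ge\ \frac{1}{p}\left(1 - \frac{\bar{\nu}}{\nu_1}\right) \rho\ -\ C\, C'\, \rho^{p^\ast/p}.
\]
Since $\bar{\nu} < \nu_1$ the leading coefficient is strictly positive, while $p^\ast/p > 1$ makes the second term of higher order in $\rho$. Hence the right-hand side is positive for all $\rho$ small enough, and choosing $\rho_* > 0$ below the threshold where it vanishes yields $\inf_{u \in \bdry{U_\rho}} I(u) > 0$ for every $0 < \rho < \rho_*$, as required.
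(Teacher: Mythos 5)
Your proposal is correct and takes essentially the same route as the paper's proof: the pointwise bound $F(x,t)\le\frac{\bar{\nu}}{p}|t|^p+C|t|^{p^\ast(s+1)}$ obtained from \eqref{3.7}--\eqref{3.8}, the control of the supercritical integral by applying the Sobolev embedding to $|u|^s u$ via the identity \eqref{equal}, the Rayleigh-quotient bound $\int_\Omega |u|^p\,dx\le\nu_1^{-1}\int_\Omega A_0(x)|\nabla u|^p\,dx$, and positivity on $\set{L(u)=\rho}$ for small $\rho$. The only difference is presentational: the paper keeps the $A_0$- and $A$-parts of $L(u)$ separate and compares their coefficients, writing $L(u)^{p^\ast/p}$ as $L(u)^{p/(N-p)}\int_\Omega A(x)|u|^{ps}|\nabla u|^p\,dx$, whereas you collapse everything into the single scalar inequality $I(u)\ge c_0\,\rho - C\,\rho^{p^\ast/p}$, which yields the same conclusion.
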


\begin{proof}
Fix $\widetilde{\nu} \in \ ]\nu,\nu_1[$. 
From \eqref{3.7}, \eqref{3.8} and direct computations it follows that
\[
F(x,t)\ \le\ \frac{\widetilde{\nu}}{p}\ |t|^p + a_{12}\ |t|^{p^\ast(s+1)} 
\quad \text{for a.e. } x \in \Omega, \text{ all } t \in \R
\]
for a suitable constant $a_{12} > 0$; so,
\begin{equation} \label{3.9}
I(u)\ \ge\ \frac{1}{p} \int_\Omega \left(A_0(x) + A(x)\ |u|^{ps}\right) |\nabla u|^p\ dx
\ -\ \frac{\widetilde{\nu}}{p}\ \int_\Omega |u|^p\ dx - a_{12} \int_\Omega |u|^{p^\ast(s+1)}\ dx.
\end{equation}
Now, the Sobolev Embedding Theorem, \eqref{equal} and \ref{h1} imply
\begin{equation} \label{less}
\begin{split}
\int_\Omega |u|^{p^\ast(s+1)}\ dx &=\ 
\int_\Omega ||u|^s u|^{p^\ast}\ dx \ 
\le\ a_{13} \left(\int_\Omega |\nabla (|u|^s u)|^p\ dx\right)^{p^\ast/p}\\
&\le\ a_{14} \left(\int_\Omega A(x)\ |u|^{ps}\ |\nabla u|^p\ dx\right)^{p^\ast/p}\\
&\le\ a_{14}\ L(u)^{\frac{p}{N-p}} \int_\Omega A(x)\ |u|^{ps}\ |\nabla u|^p\ dx
\end{split}
\end{equation}
for some constants $a_{13}, a_{14} > 0$. Hence,   
by using \eqref{1.3} and \eqref{less} in \eqref{3.9} 
we obtain that
\[
I(u)\ \ge\ \frac{1}{p} \left(1 - \frac{\widetilde{\nu}}{\nu_1}\right) \int_\Omega A_0(x)\ |\nabla u|^p\ dx
+ \frac{1}{p}\ \Big(1 - a_{15}\ L(u)^{\frac{p}{N-p}}\Big) \int_\Omega A(x)\ |u|^{ps}\ |\nabla u|^p\ dx
\]
for a suitable constant $a_{15} > 0$.
Thus, taking $\rho_* > 0$ so that $a_{15}\ \rho_*^{\frac{p}{N-p}} < \frac{\widetilde{\nu}}{\nu_1}$,
we have that, fixing any $0 < \rho < \rho_*$,
\[
\begin{split}
I(u)\ &\ge\ \frac{1}{p} \left(1 - \frac{\widetilde{\nu}}{\nu_1}\right) \int_\Omega A_0(x)\ |\nabla u|^p\ dx
+ \frac{1}{p}\ \left(1 - \frac{\widetilde{\nu}}{\nu_1}\right) \int_\Omega A(x)\ |u|^{ps}\ |\nabla u|^p\ dx\\
&=\ \left(1 - \frac{\widetilde{\nu}}{\nu_1}\right) \rho\ > 0
\end{split}
\]
for all $u \in \bdry{U_\rho}$, i.e., so that $L(u) = \rho$.
\end{proof}

Now, we are ready to prove Theorem \ref{Theorem 1.2}.

\begin{proof}[Proof of Theorem \ref{Theorem 1.2}]
Since $\nu < \nu_1$, taking
\[
F(x,t) = \int_0^t \big[\mu |\tau|^{p(s+1)-2} \tau + g(x,\tau)\big] d\tau,
\]
from \ref{h2} and \ref{h3} it follows that both
\eqref{3.7} and \eqref{3.8} hold;
hence, Lemma \ref{Lemma 3.2} applies to $I(u) = \J(u)$ and 
a radius $\rho_* > 0$ exists so that 
\begin{equation} \label{mp1}
\inf_{u \in \bdry{U_\rho}} \J(u) > 0\qquad \hbox{for all $0 < \rho < \rho_*$.}
\end{equation}
On the other hand, as $\mu > \mu_1$, fixing 
$\widetilde{\mu} \in\ ]\mu_1,\mu[$ 
from \eqref{1.3} it follows that $\varphi \in X \setminus \set{0}$ exists
such that $S(\varphi) < \widetilde{\mu}$, i.e.,
\begin{equation} \label{mp2}
\frac{1}{s + 1} \int_\Omega |\varphi|^{p (s+1)}\ dx 
\ >\ \frac{1}{\widetilde{\mu}} \int_\Omega A(x)\ |\varphi|^{ps}\ |\nabla \varphi|^p\ dx.
\end{equation}
Thus, \eqref{mp2} together with \eqref{1.4} gives
\[
\begin{split}
\J(\sigma \varphi)\ \le\ &- \left(\frac{\mu}{\widetilde{\mu}} - 1\right)\
\frac{\sigma^{p (s+1)}}{p} \ \int_\Omega A(x)\ |\varphi|^{ps}\ |\nabla \varphi|^p\ dx\\ 
&+\ \frac{\sigma^p}{p} \ \int_\Omega A_0(x)\ |\nabla \varphi|^p\ dx 
+ a_3\ |\Omega| + a_4\ \sigma^r\ \int_\Omega |\varphi|^r\ dx
\end{split}
\]
with $\frac{\mu}{\widetilde{\mu}} - 1 > 0$ and $r < p(s+1)$,
so 
\[
\J(\sigma \varphi) \ \to\ -\infty \quad \text{as } \sigma \to + \infty.
\]
Hence, $\sigma_\varphi > 0$ exists so that
\begin{equation} \label{mp3}
\J(\sigma \varphi) < 0\qquad \hbox{for all $\sigma > \sigma_\varphi$.} 
\end{equation}
Now, take $0 < \rho < \rho_*$ as in \eqref{mp1}
and, from \eqref{mp0} with $u_0=\varphi$
and \eqref{mp3}, fix $\sigma_1 > 0$ so large that 
both $\sigma_1 > \sigma_*(\varphi,\rho)$ and $\sigma_1 > \sigma_\varphi$
hold, hence if $u_1\ =\ \sigma_1 \varphi$
we have that
\begin{equation} \label{mp4}
u_1 \in X \setminus \closure{U}_\rho\quad 
\hbox{and}\quad \J(u_1) < 0.
\end{equation}
Summing up, from \eqref{mp1} and \eqref{mp4}
we can take $\beta$ as in \eqref{2.2} with $J(u) = \J(u)$,
moreover, since $\J$ satisfies the \wCPS{\beta} condition 
by Lemma \ref{Lemma 3.1} \ref{Lemma 3.1.ii}, 
Theorem \ref{Theorem 2.4} applies and
$K_\J^\beta \ne \emptyset$. \\
At last, again from \eqref{2.2} and \eqref{mp1} we have that 
\[
\beta \ge \inf_{u \in \bdry{U_\rho}}\ \J(u) > 0 = \J(0),
\]
which gives $0 \notin K_\J^\beta$.
\end{proof}

\begin{remark}\label{trivial}
Consider $g(x,t)$ as in Example \ref{ex1} with
$b \in L^\infty(\Omega)$ and $p < r < p (s + 1)$.
If both $\nu \le \nu_1$ and $\mu \le \mu_1$ hold,
and coefficient $b(x)$ is nonpositive, eventually $b(x) \not\equiv 0$
if $\nu = \nu_1$ and $\mu = \mu_1$, then from \eqref{1.3} it follows that
\[
\langle d\J(u),u\rangle\ \ge\ (\nu_1 - \nu) \int_\Omega |u|^{p}\ dx
+ (\mu_1 - \mu) \int_\Omega |u|^{p (s+1)}\ dx - \int_\Omega b(x)\ |u|^{r}\ dx\
>\ 0
\]
for all $u \in X\setminus \{0\}$.\\ 
Hence, the only critical point of $\J$ is zero, i.e.,
problem \eqref{1.2} with the nonlinear term as in Example \ref{ex1}
admits only the trivial solution.
\end{remark}

%-----------------------------

\section*{Acknowledgements}
Kanishka Perera would like to thank
the {\sl Dipartimento di Matematica} at {\sl Universit\`a degli Studi di Bari Aldo Moro}
for its kind hospitality, 
as part of this work was carried out when he was visiting Bari. Vice versa,
Anna Maria Candela would like to thank
the {\sl Department of Mathematics and Systems Engineering} at 
{\sl Florida Institute of Technology}
for its kind hospitality, as this work was finished 
when she was visiting Melbourne.\\
The research that led to the present paper was partially supported 
by MUR PRIN 2022 PNRR Research Project 
P2022YFAJH {\sl ``Linear and Nonlinear PDE's: New Directions and Applications''}
and by INdAM-GNAMPA program {\sl ``GNAMPA Professori Visitatori 2022''}.

%----------------------------------

\def\cdprime{$''$}

\end{document}